\theoremstyle{plain}% default
\newtheorem{theorem}{Theorem}[section]
\newtheorem{maintheorem}{Theorem}
\newtheorem{lemma}[theorem]{Lemma}
\newtheorem{proposition}[theorem]{Proposition}
\newtheorem{maincorollary}[maintheorem]{Corollary}
\theoremstyle{definition}
\newtheorem{example}[theorem]{Example}
\theoremstyle{remark}
\newtheorem{remark}[theorem]{Remark}%[section]
\def\R{\ensuremath{\mathbb R}}
\def\top{\operatorname{top}}
\numberwithin{equation}{section}
\begin{document}

\title[Invariant probability measures for impulsive semiflows]{Invariant probability measures and non-wandering sets for impulsive semiflows}
\author[J. F. Alves]{Jos\'{e} F. Alves}
\address{Jos\'{e} F. Alves\\ Centro de Matem\'{a}tica da Universidade do Porto\\ Rua do Campo Alegre 687\\ 4169-007 Porto\\ Portugal}
\email{jfalves@fc.up.pt} \urladdr{http://www.fc.up.pt/cmup/jfalves}

\author[M. Carvalho]{Maria Carvalho}
\address{Maria Carvalho\\ Centro de Matem\'{a}tica da Universidade do Porto\\ Rua do
Campo Alegre 687\\ 4169-007 Porto\\ Portugal}
\email{mpcarval@fc.up.pt}

\date{\today}
\thanks{The authors were partially funded by the
European Regional Development Fund through the program COMPETE and by the Portuguese Government through the FCT - Funda{\c c}{\~a}o para a Ci{\^e}ncia e a Tecnologia under the project PEst-C/MAT/UI0144/2013.
JFA was also partially supported by Funda\c c\~ao Calouste Gulbenkian and the project PTDC/MAT/120346/2010.}
\keywords{Impulsive Dynamical System, Invariant Measure, Non-wandering Set}
\subjclass[2010]{37A05}

\begin{abstract}
We consider  impulsive dynamical systems defined on compact metric spaces and their respective impulsive semiflows. We establish sufficient conditions for the existence of probability measures which are invariant by such impulsive semiflows. We also deduce the forward invariance of their non-wandering sets except the discontinuity points.
\end{abstract}

\maketitle

\setcounter{tocdepth}{2}

\tableofcontents %\clearpage

\section{Introduction}

An impulsive semiflow is prescribed by three ingredients: a continuous semiflow on a space $X$ which governs the state of the system between impulses; a set $D \subset X$ where the flow undergoes some abrupt perturbations, whose duration is, however, negligible in comparison with the time length of the whole process; and an impulsive function $I:D\to X$ which specifies how a jump event happens each time a trajectory of the flow hits $D$, and whose action may be a source of discontinuities on the trajectories.

Dynamical systems with impulse effects seem to be the most adequate mathematical models to describe real world phenomena that exhibit sudden changes in their states. For example, the theoretical characterizations of wormholes \cite{T94}, also called Einstein-Rosen bridges, seem to fit the description of the traverse effects an impulsive function $I$ acting on a set $D$ induces on a semiflow, thereby possibly creating odd shortcuts in space-time \cite{V95}. While at present it appears unlikely that nature allows us to observe a wormhole, these hypothetical entities, with unusual and inherently unstable topological, geometrical and physical properties, show up as valid solutions of the Einstein field equations for the gravity. We also refer the reader to the reference \cite{LBS89}, where other examples of nature evolution processes are analyzed within the new branch of differential equations with impulses; in addition, see
\cite{BS89,CGR06,d05,DB90,EFL91,JL07,L99,N99,R97,Z94}.

One of the major developments so far on the theory of impulsive dynamical systems has been to extend the classical theorem on existence and uniqueness of solutions and to establish sufficient conditions to ensure a complete characterization and some asymptotic stability of the limit sets \cite{B07,BF07,BF08,C04b,K94a}. Meanwhile, a significant progress in the study of dynamical systems has been achieved due to a remarkable sample of the so-called ergodic theorems \cite{W82} which concern the connection between the time and the spatial averages of observable measurable maps along orbits, and whose fundamental request is the existence of an invariant probability measure. Given the impact on applications of these ergodic results, we addressed the question of whether an impulsive flow acting on a compact metric space preserves one such a measure. To our best knowledge, no research on impulsive dynamical systems has been dealing with this timely issue.

Aware of the fact that discontinuous discrete dynamical systems may preserve no invariant probability measure and bearing in mind that, at large, an impulsive flow exhibits discontinuities, we looked for a quotient structure where the motion of the impulsive semiflow could be redesigned as a continuous flow, although defined on a different compact metric space. Along this procedure, which was inspired by Example~\ref{ex.with}, we were faced with an overriding problem. After getting an invariant probability measure on the quotient space, we had to lift it to the original discontinuous flow, with respect to which recurrence to positive measure subsets should occur \cite{H56} and the non-wandering set had to receive full measure. Yet, without continuity of the flow, the invariance of the sets of recurrent or non-wandering points is not guaranteed. To overcome these difficulties we turned to properties of the function $I$. Hopefully, we expected to find reasonable, not too demanding, assumptions on the impulse outcome and its time schedule that allowed us to find an invariant probability measure for an impulsive flow. This is the content of our main result.

We also obtain some interesting properties for non-wandering sets of impulsive semiflows. The relevance of non-wandering sets in dynamical systems goes far beyond their intrinsic dynamical value, for they actually play a nontrivial role in uniformly hyperbolic dynamics and the structural stability of systems \cite{P88,S70}. From a measure theoretical viewpoint, within an ergodic approach to dynamical systems, non-wandering sets are also the natural place to look for invariant probability measures, since any invariant probability measure by a semiflow must be supported on the non-wandering set.

We believe this work is a promising starting point that will encourage further study of many relevant ergodic properties of impulsive semiflows, as a formula for the topological entropy, a variational principle, the existence of physical measures, among others.

\subsection{Non-wandering sets} We start defining precisely two of the main objects that we shall use in this work, namely the notions of semiflow and non-wandering set.
Given a metric space $X$, we say that  $\phi: \R^+_0 \times X\to X$ is a \emph{semiflow} if for all $x\in X$ and all $s,t\in\R^+_0$ we have
\begin{enumerate}
\item $\phi_0(x)= x$,
\item $\phi_{t+s}(x)=\phi_t(\phi_s(x))$,
\end{enumerate}
where $\phi_t(x)$ stands for  $\phi(t,x)$. The curve defined for $t\ge0$ by $\phi_t(x)$ is called the \emph{$\phi$-trajectory} of the point $x\in X$.
We say that a set $A\subset X$ is  \emph{forward invariant} under   $\phi$  if we have $\phi_t(A) \subseteq A$ for all $t \geq 0$.

A point $x\in X$ is said to be \emph{non-wandering} for a semiflow~$\phi$ if, for every neighborhood $U$ of $x$ and any $T>0$, there exists $t\geq T$ such that $\phi_t^{-1}(U)\cap U\neq\emptyset$. The \emph{non-wandering set} of  $\phi$ is defined as
  $$\Omega_\phi= \{x\in X: \text{ $x$ is non-wandering for $\phi$\,}\}.$$
It follows immediately from this definition that the non-wandering set $\Omega_\phi$  of a semiflow  is closed. Moreover, $\Omega_\phi$   contains the set of limit points of the semiflow, which is clearly nonempty when $X$ is compact. Therefore, non-wandering sets of semiflows defined on compact sets are always nonempty and compact.

\subsection{Impulsive dynamical systems} Given a continuous semiflow $\varphi:\R^+_0\times X\to X$, a compact set $D\subset X$  and a continuous function $I:D\to X$ we define a new function $\tau_1:X\to [0,+\infty]$ as
$$
\tau_1(x)=
\begin{dcases}
\inf\left\{t> 0:\varphi_t(x)\in D\right\} ,& \text{if } \varphi_t(x)\in D\text{ for some }t>0;\\
+\infty, & \text{otherwise.}
\end{dcases}
$$
This corresponds to assigning to each point the time its $\varphi$-trajectory needs to spend to hit the set $D$.
Assuming $\tau_1(x)>0$ for all $x\in X$, we define the \emph{impulsive trajectory} $\gamma_x:[0,T(x)[\,\to X$  and the subsequent \emph{impulsive times} of $x\in X$ according to the following rules:
\begin{enumerate}
\item If $0\le t<\tau_{1}(x)$, then we set  $\gamma_x(t)=\varphi_t(x)$.
\item If $\tau_1(x)<\infty$, then we proceed inductively:
\begin{enumerate}
\item Firstly we set
  $$\gamma_x(\tau_1(x))=I(\varphi_{\tau_1(x)}(x)).$$
  Defining the second impulsive time of $x$ as
  $$\tau_2(x)=\tau_1(x)+\tau_1(\gamma_x(\tau_1(x))),$$
 we set
         $$\gamma_x(t)=\varphi_{t-\tau_1(x)}(\gamma_x(\tau_1(x))),\quad\text{for }\tau_1(x)<t<\tau_2(x).$$
         \item Assuming that $\gamma_x(t)$ is defined for $t<\tau_{n}(x)$, for some $n\ge 2$, we set
  $$\gamma_x(\tau_{n}(x))=I(\varphi_{\tau_n(x)-\tau_{n-1}(x)}(\gamma_x({\tau_{n-1}(x)}))).$$
  Defining the $(n+1)^{\text{th}}$ impulsive time of $x$ as
  $$\tau_{n+1}(x)=\tau_n(x)+\tau_1(\gamma_x(\tau_n(x))),$$
  we set
         $$\gamma_x(t)=\varphi_{t-\tau_i(x)}(\gamma_x(\tau_n(x))),\quad\text{for }\tau_n(x)<t<\tau_{i+1}(x).$$
\end{enumerate}
Finally, we define the time length of the trajectory of $x$ as
 $$T(x)=\sup_{n\ge 1}\,\{\tau_n(x)\}.
 $$
\end{enumerate}
In general, given an initial condition $x\in X$, both situations $T(x)=+\infty$ or $T(x)<+\infty$ are possible; see e.g. \cite[Example 2.6]{C04a} and Remark~\ref{re.infty} below.
In this work  we are going to consider only the first possibility, meaning that  the impulsive trajectories of points in $X$ are defined for all $t\ge0$.

\begin{remark}\label{re.infty} Under the fairly reasonable condition $I(D)\cap (D)=\emptyset$, for instance, we have  $T(x)=\infty$ for all $x\in X$. Indeed, it has been proved in \cite[Theorem~2.7]{C04a} that $\tau_1$ is always lower semicontinuous on the set $X\setminus D$. As $D$ is compact and $I$ is continuous, then $I(D)$ is compact. Supposing that $I(D) \cap D=\emptyset$, then by the lower semicontinuity of $\tau_1$ on $X\setminus D$  there must be some $\alpha>0$ such that $\tau_1(x)>\alpha$ for all $x\in I(D).$ This clearly implies that $T(x)=\infty$ for all $x\in X$.
\end{remark}

We say that $(X,\varphi,D,I)$ is an \emph{impulsive dynamical system} if
 $$\tau_1(x)>0\quad\text{and}\quad T(x)=+\infty, \quad\text{for all $x\in X$}.$$
 We call $D$ the \emph{impulsive set}, $I$ the \emph{impulsive function} and $\tau_1$ the \emph{first impulsive time} of the impulsive dynamical system.
The  \emph{(impulsive) semiflow} $\phi$ of an impulsive dynamical system $(X,\varphi, D, I)$ is defined as
$$
\begin{array}{cccc}
        \phi:  &  \mathbb{R}^+_0 \times X & \longrightarrow &X \\
        & (t,x) & \longmapsto & \gamma_x(t), \\
        \end{array}$$
where $\gamma_x(t)$ is the impulsive trajectory of $x$ determined by $(X,\varphi,D, I)$.
It has been  proved in  \cite[Proposition 2.1]{B07} that $\phi$ is indeed a semiflow, though not necessarily continuous.

\subsection{Invariant probability measures}\label{se.im} A map between two topological spaces is called \emph{measurable} if the pre-image of any Borel set is a Borel set. We say that an invertible  map is  \emph{bimeasurable} if both the map and its inverse are measurable.
 Notice that the measurability of a semiflow $\phi:\R^+\times X\to X$ gives in particular that $\phi_t$ is measurable for each $t\ge0$.
%   $\phi$ being measurable we mean that $\phi^{-1}(A)$ is a Borel set in $\R_0^+\times X$ for every Borel set $A\subset X$.

A probability measure $\mu$ on the Borel sets of a topological space $X$ is said to be   \emph{invariant} by a semiflow~$\phi$ (or \emph{$\phi$-invariant}) if $\phi$ is measurable and
   $$\mu(\phi_t^{-1}(A))=\mu(A),$$
for every Borel set $A\subset X$ and every $t\geq 0$. We denote by $\mathcal M(X)$ the set of all probability measures on the Borel sets of $X$ and by $\mathcal M_\phi(X)$ the set of those measures in $\mathcal M(X)$ which are $\phi$-invariant.

Given a measurable map $f:X\to Y$ between two topological spaces $X$ and $Y$ we introduce the \emph{push-forward} map
$$
\begin{array}{ccc}
        f_*:   \mathcal M(X) & \longrightarrow & \mathcal M(Y)\\
        \mu & \longmapsto & f_*\mu \\
        \end{array}$$
with $f_*\mu$ defined for any $\mu\in  \mathcal M(X)$ and any Borel set $B\subset Y$ as
 $$f_*\mu(B)=\mu(f^{-1}(B)).$$
 Observe that  $\mu\in \mathcal M(X)$  belongs to  $ \mathcal M_\phi(X)$ if and only if $(\phi_t)_*\mu=\mu$ for all $t\ge0$.

The \emph{support} of a measure  $\mu\in \mathcal M(X)$  is defined as the set of points $x\in X$ such that $\mu(U)>0$ for any neighborhood $U$ of $x$. If a probability measure is invariant by a semiflow, then its support must necessarily be contained in the non-wandering set of that semiflow; see Lemma~\ref{le.suporte}.

\subsection{Statement of results} The non-wandering set $\Omega_\phi$ of an impulsive semiflow $\phi$ may not be forward invariant, as the two examples in Section~\ref{se.examples} illustrate. This is the main difficulty that we have to deal with  to establish the existence of invariant probability measures for impulsive semiflows. To overcome this difficulty  we need to ensure that  points in $\Omega_\phi\setminus D$ which are close to $D$ necessarily have small first impulsive times. This corresponds to  the continuity of the function
%
%A way of ensuring this is imposing the continuity of a  slightly modified version  of the function $\tau_1$ restricted to $\Omega_\phi$. We introduce
%
$\tau_D:\Omega_\phi\to [0,+\infty]$, defined for $x\in\Omega_\phi$ as
$$
\tau_D(x)=
\begin{cases}
\tau_1(x), &\text{if $x\in \Omega_\phi\setminus D$};\\
0, &\text{if $x\in\Omega_\phi\cap D$.}
\end{cases}
$$
The continuity of $\tau_D$ also means that there is no $\varphi$-trajectory contained  in $\Omega_\phi$  passing through a point in~$D$.

To prove the existence of invariant probability measures for certain impulsive semiflows we must ensure in advance from properties of the map $I$ that the set $\Omega_\phi\setminus D$ is not empty. This amounts to request that, if the trajectory of a non-wandering point hits~$D$, then the instantaneous action of the impulse $I$ makes the orbit leave $D$ while staying in~$\Omega_\phi$.

\begin{maintheorem}\label{the.A}
Let $\phi$ be the semiflow of  an impulsive dynamical system $(X,\varphi, D, I)$. If~$\tau_D$ is continuous and $I(\Omega_\phi \cap D)\subset \Omega_\phi\setminus D$, then $\phi$ has some invariant probability measure. Moreover, any $\phi$-invariant probability measure $\mu$ has its support contained  in $\Omega_\phi$ and $\mu(D)=0$.
\end{maintheorem}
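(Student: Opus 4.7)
My strategy is to pass to a quotient space on which the impulsive semiflow becomes continuous, produce an invariant probability measure there via Krylov--Bogolyubov, and lift it to $X$. Let $\sim$ denote the equivalence relation on $X$ generated by $p \sim I(p)$ for every $p \in D$, set $\widetilde{X} = X/{\sim}$ with the quotient topology, and let $\pi \colon X \to \widetilde{X}$ be the canonical projection. Since the impulsive trajectories from $p$ and $I(p)$ agree for every $t > 0$, the formula $\widetilde\phi_t(\pi(x)) = \pi(\phi_t(x))$ unambiguously defines a semiflow $\widetilde\phi$ on $\widetilde X$.

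The decisive technical step is to show that $\widetilde X$ is a compact metric space and that $\widetilde\phi$ is continuous (at least on $\pi(\Omega_\phi)$, where the invariant measure is to be supported). The only source of discontinuity of $\phi$ is a perturbation of the time at which a trajectory hits $D$, and the identification $p \sim I(p)$ is designed precisely to absorb it; controlling this uses the continuity of $\tau_D$: for $x \in \Omega_\phi$ near $D$, $\tau_D(x) = \tau_1(x)$ is small, so the $\phi$-orbit promptly jumps and re-synchronises, modulo $\sim$, with nearby orbits that start outside $D$. Once $\widetilde\phi$ is continuous on the compact metric space $\widetilde X$, Krylov--Bogolyubov applied to Ces\`aro averages $T^{-1}\int_0^T (\widetilde\phi_t)_*\delta_{\pi(x)}\, dt$ with $x \in \Omega_\phi$ yields a $\widetilde\phi$-invariant probability measure $\widetilde\mu \in \mathcal M(\widetilde X)$, automatically with support inside $\pi(\Omega_\phi)$.

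To lift $\widetilde\mu$ back to $X$, the hypothesis $I(\Omega_\phi \cap D) \subset \Omega_\phi \setminus D$ guarantees that every $\sim$-class meeting $\pi(\Omega_\phi)$ contains exactly one representative outside $D$, so a measurable section $\sigma \colon \pi(\Omega_\phi) \to \Omega_\phi \setminus D$ of $\pi$ exists by standard selection results, and I set $\mu := \sigma_*\widetilde\mu$. The $\phi$-invariance of $\mu$ should follow by combining the $\widetilde\phi$-invariance of $\widetilde\mu$ with the fact that for every $x$ the set $\{t \geq 0 : \phi_t(x) \in D\} = \{\tau_n(x)\}_{n\ge 1}$ is discrete, so switching between the two representatives of a class only alters $\mu(\phi_t^{-1}(A))$ on a Lebesgue-null set of times $t$. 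The inclusion $\operatorname{supp}\mu \subset \Omega_\phi$ is the general Lemma~\ref{le.suporte} already announced. Finally, by $\phi$-invariance and Fubini,
$$\mu(D) \;=\; \int_0^1 \mu(\phi_t^{-1}(D))\, dt \;=\; \int_X \operatorname{Leb}\bigl\{t \in [0,1] : \phi_t(x) \in D\bigr\}\, d\mu(x) \;=\; 0,$$
the inner set being finite for each $x$. I expect the main obstacle to be the lifting step: verifying $\phi$-invariance of $\mu$ for \emph{every} $t$ rather than only almost every $t$ demands careful use of the impulsive structure, since $\phi$ itself is not continuous and the naive identities relating $\phi_t \circ \sigma$ to $\sigma \circ \widetilde\phi_t$ fail exactly on the impulsive times.
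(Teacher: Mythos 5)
Your overall architecture coincides with the paper's (quotient by the impulse identification, Kryloff--Bogoliouboff on the quotient, lift back), but the proposal contains one genuine error and leaves the two hardest steps unproved. The error is the claim that ``the impulsive trajectories from $p$ and $I(p)$ agree for every $t>0$,'' so that $\widetilde\phi_t(\pi(x))=\pi(\phi_t(x))$ is unambiguous on all of $X/{\sim}$. This is false: since $\tau_1(p)>0$, the trajectory \emph{starting} at $p\in D$ flows with $\varphi$ from $p$ and does not jump to $I(p)$ (in Example~\ref{ex.with}, $\phi_t(1,0)=(\cos t,\sin t)$ while $\phi_t(-1,0)=(-\cos t,-\sin t)$, and these are not $\sim$-equivalent). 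The quotient semiflow can only be defined through the unique representative in $\Omega_\phi\setminus D$, and for that one must first know that $\phi_t(\Omega_\phi\setminus D)\subset\Omega_\phi\setminus D$ for all $t\ge0$ --- this is exactly Theorem~\ref{the.B}, which rests on Lemma~\ref{le.omegaD0} and the hypothesis $I(\Omega_\phi\cap D)\subset\Omega_\phi\setminus D$, and which your proposal never establishes. The same forward invariance is what makes the conjugacy $\phi_t\circ\sigma=\sigma\circ\widetilde\phi_t$ hold for \emph{every} $t$ (including impulsive times, since by definition $\gamma_x(\tau_n(x))$ is already the post-jump point, which lies in $\Omega_\phi\setminus D$); the ``Lebesgue-null set of times'' patch you anticipate needing is therefore unnecessary, but without Theorem~\ref{the.B} your lifting step is genuinely incomplete. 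Second, the ``decisive technical step'' --- that $\pi(\Omega_\phi)$ is a compact \emph{metric} (not merely pseudometric) space and that the induced semiflow is continuous --- is asserted with only a heuristic; this is the technical core of the paper (Lemmas~\ref{le.compame} and~\ref{le.contcomp}, Proposition~\ref{pr.omega}), requiring a $T_0$ argument for metrizability and a case analysis on $\tau_1(x)$ versus $\tau_1(y)$ that uses both the continuity of $\tau_D$ and a uniform lower bound $\tau_1>\alpha$ on the compact set $I(\Omega_\phi\cap D)$. Also, the measurability of your section $\sigma$ does not need abstract selection theorems: $\sigma$ is the inverse of the continuous injection $\pi\vert_{\Omega_\phi\setminus D}$, which is bimeasurable by Purves' theorem, as the paper notes.

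On the positive side, your argument for $\mu(D)=0$ is correct and is simpler and more general than the paper's Lemma~\ref{le.mud}: since $\phi$ is jointly measurable and, for each $x$, the set $\{t\in[0,1]:\phi_t(x)\in D\}$ is contained in $\{0\}\cup\{\tau_n(x)\}_{n\ge1}$ and hence Lebesgue-null, Tonelli gives
\begin{equation*}
\mu(D)=\int_0^1\mu\bigl(\phi_t^{-1}(D)\bigr)\,dt=\int_X\operatorname{Leb}\bigl\{t\in[0,1]:\phi_t(x)\in D\bigr\}\,d\mu(x)=0
\end{equation*}
for \emph{any} $\phi$-invariant probability measure, without using the continuity of $\tau_D$ or the hypothesis on $I$. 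The support statement is correctly delegated to Lemma~\ref{le.suporte}. So the conclusion $\mu(D)=0$ and the support claim are fine; the existence part is where the proposal has real gaps.
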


Notice that, under the condition $I(\Omega_\phi \cap D)\subset \Omega_\phi\setminus D$, we  necessarily have $\Omega_\phi\setminus D\neq\emptyset$.
Regarding the continuity of $\tau_D$ (or $\tau_1$), we consider the usual one-point compactification topology in $[0,+\infty)\cup\{+\infty\}$. As already mentioned in Remark~\ref{re.infty}, the function $\tau_1$ is always lower semicontinuous  on $X\setminus D$. Additionally, the upper semicontinuity of $\tau_1$ on $X\setminus D$ holds whenever the impulsive set $D$ satisfies some \emph{tube condition}; see \cite[Theorem~2.11]{C04a} for more details.

%As a  consequence of Lemma~\ref{le.hipoteses} below and Theorem~\ref{the.A} we obtain the following result.
%%Observe that the assumption $I(\Omega_\phi \cap D)\cap D=\emptyset$ holds in particular if $I(D)\cap D=\emptyset$.
%
%
%\begin{maincorollary}\label{co.B}
%Let $\phi$ be the semiflow of an impulsive dynamical system $(X,\varphi, D, I)$. If~$\tau_D$~is continuous and $I( D)\cap D=\emptyset$, then $\phi$ has some invariant probability measure.\end{maincorollary}

 As a byproduct of our strategy to prove Theorem~\ref{the.A} we obtain  the following result on the forward invariance of $\Omega_\phi\setminus D$.

\begin{maintheorem}\label{the.B}
Let $\phi$ be the semiflow of  an impulsive dynamical system $(X,\varphi, D, I)$.
 If  $I(\Omega_\phi \cap D)\subset \Omega_\phi\setminus D$, then %$\Omega_\phi\setminus D$ is forward invariant under $\phi$.
 $\phi_t(\Omega_\phi\setminus D)\subset \Omega_\phi\setminus D$ for all $t\ge 0$.
\end{maintheorem}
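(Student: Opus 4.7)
The plan is to follow the impulsive trajectory of $x \in \Omega_\phi \setminus D$ piecewise along its continuous segments, showing that each segment stays in $\Omega_\phi \setminus D$ and that the jump at each impulse time still lands in $\Omega_\phi \setminus D$. Together with induction and the hypothesis $T(x) = +\infty$ (so that $\tau_n(x) \to +\infty$) this will yield $\phi_t(x) \in \Omega_\phi \setminus D$ for every $t \geq 0$.

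The central technical ingredient is a \emph{transport lemma}: if $y \in \Omega_\phi \setminus D$ and $0 \leq t < \tau_1(y)$, then $\varphi_t(y) \in \Omega_\phi$. Given a neighborhood $V$ of $\varphi_t(y)$, since $y \notin D$ the lower semicontinuity of $\tau_1$ on $X \setminus D$ together with the continuity of $\varphi$ allows me to choose a neighborhood $U$ of $y$ small enough that $\tau_1(z) > t$ and $\varphi_t(z) \in V$ for every $z \in U$; on such $U$, $\phi_t$ coincides with the continuous map $\varphi_t$. From $y \in \Omega_\phi$ I extract, for arbitrary $T > 0$, some $s \geq T$ and some $z \in U$ with $\phi_s(z) \in U$. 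The semiflow identity $\phi_s \circ \phi_t = \phi_{t+s} = \phi_t \circ \phi_s$ then shows that $w := \phi_t(z) \in V$ satisfies $\phi_s(w) = \phi_t(\phi_s(z)) \in V$, so $V \cap \phi_s^{-1}(V) \neq \emptyset$; hence $\varphi_t(y) \in \Omega_\phi$.

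To handle an impulse, I let $t \nearrow \tau_1(y)$ in the transport lemma: by closedness of $\Omega_\phi$ and continuity of $\varphi$, $\varphi_{\tau_1(y)}(y) \in \Omega_\phi \cap D$, and the hypothesis $I(\Omega_\phi \cap D) \subset \Omega_\phi \setminus D$ gives $\phi_{\tau_1(y)}(y) = I(\varphi_{\tau_1(y)}(y)) \in \Omega_\phi \setminus D$. Setting $x_0 = x$ and $x_n = \phi_{\tau_n(x)}(x)$, induction (applying the transport lemma and the jump argument to $x_n$) shows that every $x_n$ lies in $\Omega_\phi \setminus D$ and $\phi_t(x) \in \Omega_\phi \setminus D$ for each $t \in [\tau_n(x), \tau_{n+1}(x))$; since $\tau_n(x) \to +\infty$, this exhausts all $t \geq 0$. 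I expect the transport lemma to be the main obstacle, precisely because $\phi$ is discontinuous: the whole argument hinges on finding a neighborhood of $y$ on which $\phi_t$ agrees with the continuous $\varphi_t$, which is exactly what the lower semicontinuity of $\tau_1$ supplies.
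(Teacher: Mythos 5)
Your proposal is correct and follows essentially the same route as the paper: your ``transport lemma'' is exactly the paper's Lemma~\ref{le.omegaD0} (proved there with the same combination of lower semicontinuity of $\tau_1$ on $X\setminus D$, continuity of $\varphi$, and the semiflow identity applied to a recurrent point of a small neighborhood), and the jump step plus induction over the impulsive times matches the paper's argument for Theorem~\ref{the.B}.
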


Observe that in the particular case when $\Omega_\phi\cap D=\emptyset$ we clearly have $I(\Omega_\phi \cap D)\subset \Omega_\phi\setminus D$, and so  it follows from Theorem~\ref{the.B} that $\phi$ induces a continuous semiflow on the compact set~$\Omega_\phi=\Omega_\phi\setminus D$. Hence, using Kryloff-Bogoliouboff Theorem \cite[Part II, Theorem I]{KB37}, we easily get the following result.

\begin{maincorollary}\label{co.novo}
Let $\phi$ be the semiflow of  an impulsive dynamical system $(X,\varphi, D, I)$.
  If  $\Omega_\phi\cap D = \emptyset$, then $\phi$ has some invariant probability measure.
\end{maincorollary}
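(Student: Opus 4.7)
The plan is to reduce this corollary to the classical Kryloff--Bogoliouboff theorem, so the real work is to produce a nonempty compact set on which $\phi$ acts as a \emph{continuous} semiflow. Since $\Omega_\phi\cap D=\emptyset$, the inclusion $I(\Omega_\phi\cap D)\subset\Omega_\phi\setminus D$ holds vacuously, and Theorem~\ref{the.B} yields $\phi_t(\Omega_\phi)\subset\Omega_\phi$ for every $t\ge 0$, because $\Omega_\phi=\Omega_\phi\setminus D$. Recall further that $\Omega_\phi$ was noted to be nonempty, closed and (since $X$ is compact) compact.

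The step I expect to be the main obstacle is establishing continuity of $\phi$ restricted to $\mathbb{R}^+_0\times\Omega_\phi$. My approach is to show the stronger fact that trajectories starting in $\Omega_\phi$ never trigger the impulse, i.e.\ $\tau_1(x)=+\infty$ for every $x\in\Omega_\phi$. Suppose for contradiction that some $x\in\Omega_\phi$ has $\tau_1(x)=t_0<\infty$; then $\varphi_{t_0}(x)\in D$. For $0\le t<t_0$ the impulsive and unperturbed trajectories coincide, so $\phi_t(x)=\varphi_t(x)\in\Omega_\phi$ by forward invariance. Letting $t\to t_0^-$, continuity of $\varphi$ together with closedness of $\Omega_\phi$ force $\varphi_{t_0}(x)\in\Omega_\phi\cap D$, contradicting the hypothesis. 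Hence $\phi(t,x)=\varphi(t,x)$ for all $(t,x)\in\mathbb{R}^+_0\times\Omega_\phi$, and $\phi|_{\mathbb{R}^+_0\times\Omega_\phi}$ inherits continuity from~$\varphi$.

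With these ingredients in place, I would apply the Kryloff--Bogoliouboff theorem to the continuous semiflow $\phi|_{\Omega_\phi}$ acting on the compact metric space $\Omega_\phi$, obtaining a Borel probability measure $\nu$ on $\Omega_\phi$ invariant under $\phi|_{\Omega_\phi}$. Extending $\nu$ to a Borel probability measure on $X$ by declaring $\nu(B)=\nu(B\cap\Omega_\phi)$ yields, via the forward invariance of $\Omega_\phi$, a $\phi$-invariant probability measure on $X$, as desired.
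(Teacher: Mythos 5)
Your proposal is correct and follows essentially the same route as the paper: the hypothesis of Theorem~\ref{the.B} holds vacuously, so $\Omega_\phi=\Omega_\phi\setminus D$ is a nonempty compact forward invariant set on which $\phi$ coincides with the continuous semiflow $\varphi$ (since no trajectory in $\Omega_\phi$ can reach $D$), and Kryloff--Bogoliouboff applies. Your explicit verification that $\tau_1\equiv+\infty$ on $\Omega_\phi$ merely spells out the continuity claim the paper leaves implicit (and which also follows from Lemma~\ref{le.omegaD0}).
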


Theorem~\ref{the.B} indicates that $\phi$ induces a semiflow on the set $\Omega\setminus D$. To prove Theorem~\ref{the.A} we shall show that the induced semiflow on  $\Omega\setminus D$ is conjugated by a bimeasurable invertible map to a continuous semiflow on a compact space. This is the content of the next result which, combined with Theorem~\ref{the.A}, asserts that, in measure theoretical terms, the dynamics on the most relevant part of the phase space of an impulsive  semiflow  can be seen as the dynamics of a continuous semiflow.

\begin{maintheorem}\label{the.extra}
Let $\phi$ be the semiflow of  an impulsive dynamical system $(X,\varphi, D, I)$ for which~$\tau_D$ is continuous and $I(\Omega_\phi \cap D)\subset \Omega_\phi\setminus D$. Then there are  a compact metric space~$Y$, a continuous semiflow $\psi$ in $Y$ and a continuous invertible bimeasurable map $h:\Omega_\phi\setminus D\to Y$ such that
$\psi_t\circ h\vert_{\Omega_\phi\setminus D} = h\circ \phi_t\vert_{\Omega_\phi\setminus D}$ for all $t\ge 0$.
Moreover, if $\iota:\Omega_\phi\setminus D \to X$ denotes the inclusion map, then $(\iota\circ h^{-1})_*:\mathcal M_\psi(Y)\to \mathcal M_\phi(X)$ is a bijection.
\end{maintheorem}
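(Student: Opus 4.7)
\textbf{The plan} is to construct $Y$ as the topological quotient of $\Omega_\phi$ that collapses each impulsive jump $z\leftrightarrow I(z)$ for $z\in\Omega_\phi\cap D$, thereby healing the discontinuities of $\phi$. Let $\sim$ be this equivalence relation on $\Omega_\phi$, set $Y:=\Omega_\phi/{\sim}$, write $\pi\colon\Omega_\phi\to Y$ for the quotient map, and define $h:=\pi\vert_{\Omega_\phi\setminus D}$. The hypothesis $I(\Omega_\phi\cap D)\subset\Omega_\phi\setminus D$ forces every equivalence class to meet $\Omega_\phi\setminus D$ in exactly one point, so $h$ is a bijection. I define $\psi_t\colon Y\to Y$ by $\psi_t(h(x))=h(\phi_t(x))$ for $x\in\Omega_\phi\setminus D$; this is unambiguous by Theorem~\ref{the.B} (which gives $\phi_t(\Omega_\phi\setminus D)\subset\Omega_\phi\setminus D$), and it inherits the semiflow identities from $\phi$, making $\psi_t\circ h=h\circ\phi_t$ automatic.

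\emph{Topology and measurability of $h$.} The graph of $\sim$ in $\Omega_\phi\times\Omega_\phi$ is the diagonal together with the compact set $\{(z,I(z)),(I(z),z):z\in\Omega_\phi\cap D\}$, so $\sim$ is closed and $Y$ is Hausdorff; compactness of $Y$ is inherited from $\Omega_\phi$, second countability descends through the quotient, and Urysohn supplies a compatible metric. The map $h$ is continuous as the restriction of $\pi$, and its inverse is Borel measurable because $h$ is a Borel bijection between the standard Borel spaces $\Omega_\phi\setminus D$ and $Y$.

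\emph{Continuity of $\psi$} is the crux and the main obstacle. Away from impulses, continuity of $\psi_t$ follows from continuity of $\varphi$ and $\pi$. The delicate case is when the impulsive trajectory of the base point $x_0$ undergoes an impulse at or near $t$, or when $\tau_D(x_0)=0$ (so $h(x_0)$ represents a point of $\Omega_\phi\cap D$ in $Y$). Here the hypothesis that $\tau_D$ is continuous ensures that the successive impulsive times of a nearby $x\in\Omega_\phi\setminus D$ match those of $x_0$ up to small errors; the successive $\varphi$-arcs shadow each other, and across each impulse the pre- and post-jump endpoints are identified in $Y$. Since the number of impulses accumulated in $[0,t]$ is locally bounded (again by continuity of $\tau_D$), a finite induction on this count yields continuity of $\psi_t$ at $h(x_0)$.

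\emph{Bijection of invariant measures.} Since $h^{-1}$ is Borel, $(\iota\circ h^{-1})_*$ is well-defined, and a direct computation using $h\circ\phi_t=\psi_t\circ h$ together with Theorem~\ref{the.B} shows that it sends $\mathcal M_\psi(Y)$ into $\mathcal M_\phi(X)$. Injectivity is immediate from injectivity of $\iota\circ h^{-1}$. For surjectivity, given $\nu\in\mathcal M_\phi(X)$, Lemma~\ref{le.suporte} yields $\nu(\Omega_\phi)=1$; moreover, for each $x\in X$ the set $\{t\ge 0:\phi_t(x)\in D\}$ is at most countable (consisting of the impulsive times $\tau_n(x)$ whose image falls in $D$, plus possibly $t=0$), so by Fubini and $\phi$-invariance one finds $\nu(D)=\int_0^1\nu(\phi_t^{-1}D)\,dt=0$. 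Therefore $\nu$ is concentrated on $\Omega_\phi\setminus D$, and $\mu:=h_*(\nu\vert_{\Omega_\phi\setminus D})$ provides the required $\psi$-invariant preimage.
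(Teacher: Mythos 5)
Your construction is essentially the paper's: the space $Y$ is the quotient of the non-wandering set by the identification $z\sim I(z)$, $h$ is the restricted projection, $\psi$ is the induced semiflow, and bimeasurability of $h$ comes from the Lusin--Souslin/Purves theorem on injective Borel maps. Two remarks on where you diverge. First, your treatment of the topology of $Y$ (closed equivalence relation on a compact metric space $\Rightarrow$ Hausdorff, hence metrizable quotient) is a legitimate alternative to the paper's route via the quotient pseudometric and the $T_0$ property; just note that the relation you must close under is the \emph{generated} equivalence relation, whose graph also contains $\{(z_1,z_2): z_1,z_2\in\Omega_\phi\cap D,\ I(z_1)=I(z_2)\}$ --- still closed, so the argument survives. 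Second, and more interestingly, your proof that any $\phi$-invariant $\nu$ gives $\nu(D)=0$ (the set $\{t:\phi_t(x)\in D\}$ is contained in $\{0\}\cup\{\tau_n(x)\}_n$, hence Lebesgue-null, so $\nu(D)=\int_0^1\nu(\phi_t^{-1}D)\,dt=0$ by Tonelli and invariance) is genuinely different from, and considerably simpler than, the paper's Lemma~\ref{le.mud}, which goes through the sets $D_n=\{x\in\Omega_\phi\cap D:\tau_1(x)>1/n\}$, inner regularity and the exclusion $\phi_{\epsilon_n}(K_n)\subset X\setminus\Omega_\phi$; your argument also does not use the continuity of $\tau_D$, only the measurability of $\phi$ (which the paper proves separately and which you should cite to justify Tonelli). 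The one place where your proposal under-delivers is the continuity of $\psi_t$, which is where the paper invests almost all of its effort (Lemma~\ref{le.contcomp}): the delicate subcase is a point $y$ near $x$ with $\tau_1(y)<\tau_1(x)$, where for $s\in[\tau_1(y),\tau_1(x)]$ one must compare the already-jumped $\phi_s(y)=\varphi_{s-\tau_1(y)}(I(\varphi_{\tau_1(y)}(y)))$ with the not-yet-jumped $\varphi_s(x)$; this needs both the identification $\pi\circ I=\pi$ on $D$ \emph{and} a uniform lower bound $\tau_1>\alpha>0$ on the compact set $I(\Omega_\phi\cap D)$ to rule out an immediate second jump. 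You gesture at both ingredients, but attributing the local bound on the number of impulses solely to ``continuity of $\tau_D$'' elides the role of the hypothesis $I(\Omega_\phi\cap D)\subset\Omega_\phi\setminus D$ in producing $\alpha$; as written this step is a sketch rather than a proof, though the ideas you name are the correct ones.
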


The first conclusion of Theorem~\ref{the.A} is now an easy consequence of Theorem~\ref{the.extra}. In fact, as~$\psi$ is continuous and $Y$ is a compact metric space, then $\psi$ has some invariant probability measure; see \cite[Part II, Theorem I]{KB37}. This means that $\mathcal M_\psi(Y)$ is nonempty, hence $\mathcal M_\phi(X)$ is nonempty as well.

The continuous bimeasurable map $h$ given by Theorem~\ref{the.extra}  allows us to exchange ergodic information between $\psi$ in $Y$ and $\phi$ in $\Omega_\phi$. For instance, any $\phi$-invariant probability measure gives measure zero to $D$ and the topological entropy of the continuous semiflow~$\psi$ is given by
$$h_{\top}(\psi) = h_{\top}(\psi_1)=\sup\,\{h_{\nu}(\psi_1): \nu \in M_\psi(Y)\},$$
 where $h_{\nu}(\psi_1)$ stands for the measure-theoretic entropy of the map $\psi_1$ with respect to the probability~$\nu$; see \cite{B75}. It thus follows that
$$ \sup\,\{h_{\mu}(\phi_1): \mu \in M_\phi(X)\}=h_{\top}(\psi).$$

%\end{remark}

%In particular, for any integrable function $f:X\to \R$ we have
% $$\int f d\mu= \int f\circ\pi^{-1}d\nu.$$
%To be formally rigorous, we should have written $\mu\vert_{\Omega_\phi\setminus D}$ instead of $\mu$ in the last equality of Theorem~\ref{the.extra}. However, from Theorem~\ref{the.A}, any $\phi$-invariant measure is supported on $\Omega_\phi$ and also $\mu(D)=0$, which means that there is no abuse  in thinking of  $\mu$ as $\mu\vert_{\Omega_\phi\setminus D}$.

\subsection{Overview} In Section~\ref{se.examples} we present two examples of impulsive semiflows, one with and another without an invariant probability measure. In Section~\ref{se.dynon} we reveal a few properties of a semiflow's dynamics on its non-wandering set. In particular, we prove Theorem~\ref{the.B}. In Section~\ref{se.quotient} we use the impulsive function to define an equivalence relation and thus obtain the space $Y$ of Theorem~\ref{the.extra} as the projection of the non-wandering set on the quotient space determined by that equivalence relation. We conclude the proofs of Theorem~\ref{the.A} and Theorem~\ref{the.extra} in Section~\ref{se.ipm}.

\section{Examples}\label{se.examples}

In this section  we present two examples of impulsive semiflows, the first one with an invariant probability measure and the second without such a measure. Our strategy in the first example is to guarantee that the  impulsive semiflow satisfies the assumptions of Theorem~\ref{the.A}.

\begin{example}[Impulsive system with an invariant probability measure]\label{ex.with}
Consider the phase space $X$ as the annulus
 $$X=\left\{(r\cos\theta,r\sin\theta)\in\R^2:  1\le r\le 2  , \,\theta \in [0,2\pi]\right\}
 $$
and define
  $$\varphi:\R^+_0\times X\to X$$
  as the semiflow of the vector field in $X$ given  by
  $$
   \begin{cases}
   r'=0 & \\
   \theta'=1 .&
   \end{cases}
$$
Note that the trajectories of $\varphi$ are circles spinning around zero counterclockwise. Then take
 $$D=\{(r,0)\in X: 1\le r \le 2\}
 $$
and define $I:D\to X$ by
 $$I(r,0)=\left(-\frac12-\frac12r, 0\right).
 $$
Let $\phi$ be the semiflow of the impulsive dynamical system $(X,\varphi, D, I)$. It is straightforward to verify that
 $$\Omega_\phi=\left\{(\cos\theta, \sin\theta): \frac{3\pi}{2}\le \theta\le 2\pi\right \} .$$
 Notice that $\Omega_\phi$ is not forward invariant under $\phi$, for the trajectory of $(1,0)\in\Omega_\phi$ is clearly not contained in $\Omega_\phi$. Still, we have $\Omega_\phi\setminus D\neq \emptyset$ and
  $$I(\Omega_\phi\cap D)=I(\{(1,0)\})=\{(-1,0)\}\subset \Omega_\phi\setminus D.$$
  Moreover, $\tau_D:\Omega_\phi\to [0,2\pi]$ is given for ${3\pi}/{2}\le \theta\le 2\pi$ by
     $$\tau_D(\cos\theta, \sin\theta)=2\pi - \theta,$$
   which is obviously continuous. Then, by Theorem~\ref{the.A}, the impulsive semiflow $\phi$ has some invariant probability measure.
  \end{example}

\begin{example}[Impulsive system with no invariant probability measure]
Consider again the phase space
 $$X=\left\{(r\cos\theta,r\sin\theta)\in\R^2:  1\le r\le 2  , \,\theta \in [0,2\pi]\right\},
 $$
but now define
  $$\varphi:\R^+_0\times X\to X$$
  as the semiflow associated to the vector field in $X$ given  by
  $$
   \begin{cases}
   r'=f(r) & \\
   \theta'=1 ,&
   \end{cases}
$$
where $f(r)= 1-r$, for $1\le r\le 2$. The trajectories of $\varphi$ are now curves spiraling counterclockwise and converging to the inner border circle of $X$.  Take  $$D=\{(1,0)\}$$
and $I:D\to X$ defined by
 $$I(1,0)=\left(2, 0\right).
 $$
If $\phi$ is the semiflow of the impulsive dynamical system $(X,\varphi, D, I)$, it is not difficult to see that
 $$\Omega_\phi=\left\{(\cos\theta, \sin\theta): 0\le \theta\le 2\pi\right \}$$
and that this set is not forward  invariant under $\phi$. In this case we have $\tau_D:\Omega_\phi\to [0,2\pi]$ given for $0< \theta \le 2\pi$ by
   $$\tau_D(\cos\theta, \sin\theta)=2\pi-\theta,$$
   which is clearly not continuous at the point $(1,0)$.

We claim that $\phi$ has no invariant probability measure. Actually, if $\mu$ were such a measure, then %$\mu$ would also be invariant by $\phi_{2\pi}$ and so we %, by Poincar\'e Recurrence Theorem \cite{H56},
by Lemma~\ref{le.suporte} we should have
$$1=\mu(\Omega_\phi) = \mu(\phi_{2\pi}^{-1}(\Omega_\phi)).$$
However, $\phi_{2\pi}^{-1}(\Omega_\phi)$ is the empty set.
\end{example}

\section{Dynamics on the non-wandering set}\label{se.dynon}

Here we present some  results on the non-wandering sets of semiflows.  The first result is well known for the so-called discrete time dynamical systems (or transformations). As we have not found a proof for semiflows in the literature, we  decided to include it here for the sake of completeness.

\begin{lemma}\label{le.suporte}
The support of a probability measure which is invariant by a semiflow is contained in its non-wandering set.
\end{lemma}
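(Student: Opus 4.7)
The plan is to prove the contrapositive: if $x \notin \Omega_\phi$, then $x$ is not in the support of any $\phi$-invariant probability measure $\mu$. So I would start by fixing such a wandering point $x$. Unpacking the negation of the definition of non-wandering, I obtain an open neighborhood $U$ of $x$ and some $T>0$ such that
\[
\phi_t^{-1}(U)\cap U=\emptyset\qquad\text{for every } t\ge T.
\]
The goal then reduces to showing $\mu(U)=0$.

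The key trick is the standard Poincar\'e-type disjointness argument, adapted to continuous time by sampling at multiples of $T$. I would introduce the sequence of Borel sets
\[
A_n=\phi_{nT}^{-1}(U),\qquad n=0,1,2,\dots
\]
The first step is to verify that the $A_n$ are pairwise disjoint. Indeed, if $z\in A_n\cap A_m$ with $n<m$, then $y:=\phi_{nT}(z)$ lies in $U$, and the semigroup property gives $\phi_{(m-n)T}(y)=\phi_{mT}(z)\in U$, so $y\in U\cap \phi_{(m-n)T}^{-1}(U)$; since $(m-n)T\ge T$, this contradicts the wandering property of $U$.

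The second step uses $\phi$-invariance: $\mu(A_n)=\mu(\phi_{nT}^{-1}(U))=\mu(U)$ for every $n\ge 0$. Combining disjointness with $\mu$ being a probability,
\[
\sum_{n=0}^{\infty}\mu(U)=\sum_{n=0}^{\infty}\mu(A_n)=\mu\!\left(\bigsqcup_{n\ge 0}A_n\right)\le 1,
\]
forcing $\mu(U)=0$. Since $U$ is an open neighborhood of $x$, this exhibits $x$ as a point of zero local mass, so $x\notin\operatorname{supp}(\mu)$, finishing the proof.

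I do not expect any real obstacle here: the only item to handle with some care is that the argument is performed at discrete sampling times $nT$, and one must invoke the semiflow identity $\phi_{nT}=\phi_T\circ\cdots\circ\phi_T$ together with measurability of each $\phi_t$ (which is built into the definition of $\phi$-invariance) to ensure the $A_n$ are Borel and that $\mu(A_n)=\mu(U)$. Everything else is a direct application of the definitions.
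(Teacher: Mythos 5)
Your proposal is correct and follows essentially the same route as the paper: both fix a wandering point, extract a neighborhood $U$ and a time $T$ with $\phi_t^{-1}(U)\cap U=\emptyset$ for $t\ge T$, show the preimages $\phi_{nT}^{-1}(U)$ are pairwise disjoint via the semigroup property, and then use invariance of $\mu$ plus countable additivity to force $\mu(U)=0$. No gaps; the only cosmetic difference is that the paper phrases the last step as a proof by contradiction rather than the direct inequality $\sum_n\mu(U)\le 1$.
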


\begin{proof}
Let  $\Omega_\phi$ be the non-wandering set of  a semiflow $\phi$ on $X$ and $\mu$ a probability measure invariant by $\phi$. %Notice that, by definition $\mu$, is invariant by $\phi_s$, for all $s \geq 0$.
Assume that $x \notin \Omega_\phi$. Then there exists a neighborhood $V$ of $x$ in $X$ and $T > 0$ such that
 \begin{equation}\label{propV}
\phi_t^{-1}(V)\cap V=\emptyset,
 \quad\text{for all $t \geq T$.}
\end{equation}
 We claim that for all positive integers $m>n$, we have
$$\phi_{nT}^{-1}\,(V) \cap \phi_{mT}^{-1}\,(V)= \emptyset.$$
Otherwise, if $z \in \phi_{nT}^{-1}\,(V) \cap \phi_{mT}^{-1}\,(V)$, then
$$\phi_{nT}\,(z) \in V \quad \text{ and } \quad \phi_{(m-n)T}\,(\phi_{nT}\,(z))= \phi_{mT}\,(z) \in V,$$
contradicting \eqref{propV}.

We may now conclude that $\mu(V)=0$, thus confirming that $x$ is not in the support of~$\mu$. Indeed, if $\mu(V)$ were positive, then, using the invariance of $\mu$, we would have for any $n \in \mathbb{N}$
$$\mu(\phi_{nT}^{-1}\,(V))=\mu(V)$$
and
$$\mu\left(\bigcup_{n \,\in\, \mathbb{N}} \phi_{nT}^{-1}\,(V)\right) = \sum_{n \,\in\, \mathbb{N}}\mu(\phi_{nT}^{-1}\,(V)).$$
Hence
$$1\geq \mu\left(\bigcup_{n \,\in\, \mathbb{N}} \phi_{nT}^{-1}\,(V)\right)= \sum_{n \,\in\, \mathbb{N}}\mu(V)= +\infty,$$
which is an absurd.
\end{proof}

%\begin{lemma}\label{le.omegaD0}
%If  $\tau_D\vert_{\Omega_\phi}$ is continuous, then $I(\Omega_\phi \cap D)\subset \Omega_\phi$.
%\end{lemma}
%\begin{proof}
%Given $x\in \Omega_\phi\cap D$, consider a neighborhood $U$ of $I(x)$. As $I$ is continuous, there is some $\epsilon>0$ such that $I(B_\epsilon(x)\cap D)\subset U$, where $B_\epsilon(x)$ denotes the ball of radius $\epsilon$ centered at $x$. Take $t_0>0$ small such that for all $z\in X$
% \begin{equation}
%\label{eq.bola} \dist(z,\varphi_t(z))<\frac\epsilon2,\quad\forall 0\le t< t_0.
% \end{equation}
%By the continuity of $\tau_D\vert_{\Omega_\phi}$, we may choose $\delta<\epsilon/2$ such that
% $$\tau_D\vert_{B_\delta(x)}<t_0.$$
% As $x\in\Omega_\phi$ there is some $s>0$ and $y\in B_\delta(x)$ such that $z=\phi_s(y)\in B_\delta(x)$. In particular, we have $\tau_D(z)<t_0$, which means that there exists $t<t_0$ such that $\varphi_t(z)\in D$. It follows from \ref{eq.bola} and the choice of $\delta<\epsilon/2$ that
%  $$
%  \dist(\varphi_t(z),x)\le \dist(\varphi_t(z),z)+\dist(z,x)<\frac\epsilon2+\delta<\epsilon.
%  $$
%  Hence $I(\varphi_t(z))\in U$, and so
%\end{proof}
%
%Observe that in the previous result we used in an important way the continuity of $\tau_D\vert_{\Omega_\phi}$. For  the next result it suffices to have  the continuity of $$\tau_D\vert_{\Omega_\phi\setminus D}=\tau_1\vert_{\Omega_\phi\setminus D}.$$

Now we consider a result on non-wandering sets of impulsive semiflows which will be useful to prove Theorem~\ref{the.B}.

\begin{lemma}\label{le.omegaD0} Let $\phi$ be the semiflow of an impulsive dynamical system $(X,\varphi,D,I)$. Then
%Assume that  $\tau_D\vert_{\Omega_\phi\setminus D}$ is continuous. Then
 $\varphi_{t}(\Omega_\phi\setminus D)\subset \Omega_\phi$ for all  $0\le t\le \tau_1(x)$.
\end{lemma}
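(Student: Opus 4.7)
The plan is to split the interval $[0,\tau_1(x)]$ into the open part $[0,\tau_1(x))$ and the endpoint $t_0=\tau_1(x)$ (when finite), and treat these separately: on the open part by a local continuity/semigroup argument, at the endpoint by closedness of $\Omega_\phi$.

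Fix $x\in\Omega_\phi\setminus D$ and $t_0\in[0,\tau_1(x))$. I would set $y:=\varphi_{t_0}(x)$ and, given an arbitrary neighborhood $V$ of $y$ and $T>0$, produce $s\ge T$ and $z'\in V$ with $\phi_s(z')\in V$. The key preparation is to find a neighborhood $U$ of $x$ on which the impulsive semiflow $\phi$ coincides with the continuous semiflow $\varphi$ up to time $t_0$, and which satisfies $\varphi_{t_0}(U)\subset V$. For this I would first use continuity of $\varphi_{t_0}$ to pick a neighborhood $U_0$ of $x$ with $\varphi_{t_0}(U_0)\subset V$; since $D$ is compact (hence closed) and $x\notin D$, I can arrange $U_0\subset X\setminus D$. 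Then I would invoke the lower semicontinuity of $\tau_1$ on $X\setminus D$ recalled in Remark~\ref{re.infty}, combined with $\tau_1(x)>t_0$, to shrink $U_0$ to a neighborhood $U$ of $x$ with $\tau_1(z)>t_0$ for every $z\in U$. The inductive construction of the impulsive trajectory then gives $\phi_t|_U=\varphi_t|_U$ for every $t\in[0,t_0]$, so in particular $\phi_{t_0}(U)\subset V$.

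Now I would use the non-wandering property of $x$ applied to $U$ and $T$ to obtain $s\ge T$ and $z\in U$ with $\phi_s(z)\in U$. Setting $z':=\phi_{t_0}(z)\in V$, the semigroup identity yields
$$\phi_s(z')=\phi_s(\phi_{t_0}(z))=\phi_{s+t_0}(z)=\phi_{t_0}(\phi_s(z))\in\phi_{t_0}(U)\subset V,$$
proving $y\in\Omega_\phi$. To finish, for $t_0=\tau_1(x)$ (if finite) I would take $t_n\nearrow t_0$ from below, note $\varphi_{t_n}(x)\in\Omega_\phi$ by the previous step, and conclude from continuity of $\varphi$ and closedness of $\Omega_\phi$ that $\varphi_{t_0}(x)\in\Omega_\phi$; if $\tau_1(x)=+\infty$ the endpoint case is vacuous.

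The main obstacle is the first step's delicate bookkeeping: to transport the non-wandering property from $x$ to $y$ via $\phi_{t_0}$, one needs $\phi_{t_0}$ to equal $\varphi_{t_0}$ simultaneously on $z$ and on $\phi_s(z)$, i.e.\ no point of $U$ may have a first impulse at or before time $t_0$. This is precisely what the lower semicontinuity of $\tau_1$ on $X\setminus D$ supplies; without it the semigroup calculation above would fail because an impulse of $\phi$ occurring before time $t_0$ could push $\phi_{t_0}(z)$ far from $\varphi_{t_0}(z)$, breaking the inclusion $\phi_{t_0}(U)\subset V$.
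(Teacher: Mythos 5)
Your argument is correct and is essentially the paper's own proof: both reduce to $0<t<\tau_1(x)$ via closedness of $\Omega_\phi$ and continuity of $\varphi$, use the lower semicontinuity of $\tau_1$ on $X\setminus D$ to get a neighborhood of $x$ on which $\phi$ agrees with $\varphi$ up to time $t$, and then transport the non-wandering property of $x$ to $\varphi_t(x)$ through the semigroup identity. Your element-wise computation with $z'=\phi_{t_0}(z)$ is just the pointwise version of the paper's chain of set inclusions.
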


\begin{proof} Let $x\in \Omega_\phi\setminus D$. As $\varphi$ is continuous and $\Omega_\phi$ is closed, it is enough to show that
\begin{equation*}%\label{eq.enough}
\varphi_{t}(x)\in \Omega_\phi,\quad\text{for all $0< t< \tau_1(x)$}.
\end{equation*}
Take any $0< t< \tau_1(x)$ and consider an arbitrary neighborhood $U$  of $\varphi_{t}(x)$. We need to show that, given any $T>0$, there is $s>T$ such that
\begin{equation}\label{eq.enough}
\phi^{-1}_s(U)\cap U\neq\emptyset.
\end{equation}
By the continuity of $\varphi$ we have that  $V=\varphi_{t}^{-1}(U)$ is a neighborhood of $x$. Recalling that $\tau_1\vert_{X\setminus D}$ is lower semicontinuous by \cite[Theorem~2.7]{C04a}, then there exists a neighborhood $W$ of $x$ inside the open subset $X\setminus D$  such that, for each $y\in W$, we have
$\tau_1(y)>t.$ This implies that $\phi_s(y)=\varphi_s(y)$ for all $y\in W$ and all $s\le t.$ In particular,
$$\phi_t(V\cap W)=\varphi_t(V\cap W).$$
As $x\in\Omega_\phi$ and $V\cap W$ is a neighborhood of $x$, given $T>0$, there must be some  $s>T$ such that
 $$\phi_s^{-1}(V\cap W)\cap V\cap W\neq\emptyset .$$
 Therefore,
\begin{eqnarray*}
\emptyset&\neq &\phi_t(\phi_s^{-1}(V\cap W)\cap V\cap W )\\
&\subset &\phi_t(\phi_s^{-1}(V\cap W))\cap  \phi_t(V\cap W )\\
&\subset& \phi_s^{-1}(\phi_t(V\cap W))\cap  \phi_t(V\cap W )\\
&= &\phi_s^{-1}(\varphi_t(V\cap W))\cap  \varphi_t(V\cap W )\\
&\subset&\phi_s^{-1}(U)\cap U,
\end{eqnarray*}
and so we have proved \eqref{eq.enough}.
\end{proof}

Let us now prove  Theorem~\ref{the.B}.
Consider $\phi$ the semiflow of an impulsive dynamical system $(X,\varphi,D,I)$
for which   $I(\Omega_\phi \cap D)\subset \Omega_\phi\setminus D$. We need to verify that
\begin{equation}\label{eq.phinv}
\phi_t(\Omega_\phi\setminus D)\subset \Omega_\phi\setminus D,\quad\text{for all $t\ge 0$}.
\end{equation}
%
%
%\begin{lemma}\label{le.omegaD2} Let $\phi$ be the semiflow of an impulsive dynamical system $(X,\varphi,D,I)$.
%If  $I(\Omega_\phi \cap D)\subset \Omega_\phi\setminus D$, then $\phi_t(\Omega_\phi\setminus D)\subset \Omega_\phi\setminus D$ for all $t\ge 0$.
%\end{lemma}
%\begin{proof}
First of all, notice that we have $\phi_t(x)\notin D$ for all $x\in \Omega_\phi \setminus D$ and all $t>0$. Actually, by definition of the impulsive times, the only chance for   $\phi_t(x)$ to hit $D$ at a strictly positive time would be at an impulsive time. However, using Lemma~\ref{le.omegaD0} and the assumption $I(\Omega_\phi\cap D)\subset\Omega_\phi\setminus D$, we easily get that such an hitting on $D$ cannot happen.
Consequently, to prove our result it is enough to show that
\begin{equation}\label{menos}
\phi_t(\Omega_\phi\setminus D)\subset \Omega_\phi,\quad\forall\, t>0.
\end{equation}
Take any point  $x\in \Omega_\phi\setminus D$. If $\tau_1(x)=+\infty$, then, recalling that $\phi_t(x)=\varphi_t(x)$ for all $t>0$, by Lemma~\ref{le.omegaD0} we are done. Otherwise, it follows from Lemma~\ref{le.omegaD0} and the definition of  $\tau_1(x)$ that
$$ \phi_t(x)=\varphi_{t}(x)\in \Omega_\phi\setminus D,\quad\forall \,0< t<\tau_1(x),$$
and
$$\varphi_{\tau_1(x)}(x)\in \Omega_\phi\cap D.$$
Now, as we are assuming that $I(\Omega_\phi \cap D)\subset \Omega_\phi\setminus D$, it follows from the definition of $\phi$ that
 $$\phi_{\tau_1(x)}(x)=I(\varphi_{\tau_1(x)}(x))\in \Omega_\phi\setminus D.$$
Thus we have proved \eqref{menos} for $0<t\le \tau_1(x)$. We proceed inductively, repeating the same argument on the periods between the subsequent impulsive times of $x$, thus proving \eqref{eq.phinv}.

\section{Quotient dynamics}\label{se.quotient}
Given  an impulsive dynamical system $(X,\varphi,D,I)$, we
consider the quotient  space
 $ X/_\sim$,
where $\sim$ is the equivalence relation given by
 $$x\sim y\quad \Leftrightarrow \quad x=y, \quad y=I(x),\quad x=I(y)\quad \text{or}\quad I(x)=I(y).$$
We shall use $\tilde x$ to represent the equivalence class of $x\in X$. Consider $ X/_\sim$  endowed with the quotient topology and let
  $$\pi:X\to X/_\sim$$ be the natural projection.

   We are particularly interested in the set $\pi(\Omega_\phi)$, where $\phi$ is the impulsive semiflow of $(X,\varphi,D,I)$, for it will play an important role in the sequel.
As $\Omega_\phi$ is compact, then $\pi(\Omega_\phi)$ is a  pseudometric space; see $\S$23 of \cite{W70}. Actually, we shall verify in Lemma~\ref{le.compame} that~$\pi(\Omega_\phi)$ is %a $T_0$ space and so $\pi(\Omega_\phi)$ is in fact
a metric space.

Recall that a topological space $Y$ is said to be $T_0$ if, whenever $y_1$ and $y_2$ are distinct points in $Y$, there is an open set containing one and not the other.

\begin{lemma} A pseudometric $\rho$ in a space $Y$ is a metric if and only if $Y$, endowed with the topology $\rho$ generates, is $T_0$.
\end{lemma}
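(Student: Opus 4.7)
The plan is to prove both directions directly from the definitions, using the fact that a pseudometric fails to be a metric precisely when there exist distinct points $x \neq y$ with $\rho(x,y) = 0$, and that the topology generated by $\rho$ has the open $\rho$-balls $B_\rho(z,\varepsilon) = \{w \in Y : \rho(z,w) < \varepsilon\}$ as a base.

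For the forward implication, I would assume $\rho$ is a metric and take two distinct points $y_1, y_2 \in Y$. Then $\varepsilon = \rho(y_1,y_2) > 0$, and the open ball $B_\rho(y_1,\varepsilon)$ contains $y_1$ but not $y_2$ (in fact this gives Hausdorffness, but $T_0$ is all that is required here), so $Y$ is $T_0$.

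For the reverse implication, I would argue by contrapositive: assume $\rho$ is not a metric, so there exist distinct points $y_1 \neq y_2$ with $\rho(y_1,y_2) = 0$. I then claim that every open set containing $y_1$ also contains $y_2$, and vice versa, which violates $T_0$. Indeed, if $U$ is open with $y_1 \in U$, there is $\varepsilon > 0$ such that $B_\rho(y_1,\varepsilon) \subset U$; since $\rho(y_1,y_2) = 0 < \varepsilon$, we get $y_2 \in B_\rho(y_1,\varepsilon) \subset U$. By the symmetry of $\rho$, the same argument with the roles reversed shows any open set containing $y_2$ contains $y_1$, so no open set separates $y_1$ from $y_2$, contradicting $T_0$.

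The argument is essentially routine point-set topology and I do not anticipate any real obstacle; the only subtlety is remembering that the topology generated by a pseudometric has the $\rho$-balls as a base, which is standard and can be cited from \cite{W70}.
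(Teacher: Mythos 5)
Your proof is correct and follows essentially the same route as the paper's: both directions reduce to the observation that open $\rho$-balls form a base, with metric $\Rightarrow T_0$ argued identically and the converse differing only in that you phrase it contrapositively (distinct points at distance $0$ cannot be $T_0$-separated) while the paper argues directly (a separating $\epsilon$-ball forces $\rho(y_1,y_2)\geq\epsilon$). No gaps.
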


\begin{proof}
If the topology generated by $\rho$ makes $Y$ a $T_0$ space, then given $y_1\neq y_2$ in $Y$ there is an open subset, hence some $\epsilon$-ball, about one not containing the other. Therefore, $\rho(y_1,y_2)\geq \epsilon>0$, showing that $\rho$ is a metric.

Conversely, if $\rho$ is a metric, then any two distinct points are at a positive distance, say~$\epsilon>0$, and so the $\epsilon$-ball about one is an open set containing one and not the other.
\end{proof}

\begin{lemma}\label{le.compame}
$\pi(\Omega_\phi)$ is a compact metric space. %$T_0$.
\end{lemma}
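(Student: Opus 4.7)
The plan is to split the statement into compactness and the strengthening of pseudometric to metric, since the authors have already invoked the pseudometric structure on $\pi(\Omega_\phi)$ from Willard $\S23$. Compactness is immediate: the set $\Omega_\phi$ is closed in the compact metric space $X$, hence compact, and the quotient map $\pi$ is continuous by definition of the quotient topology, so its image $\pi(\Omega_\phi)$ is compact. By the preceding lemma, upgrading the pseudometric to a metric reduces to verifying the $T_0$ separation axiom on $\pi(\Omega_\phi)$.

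I would in fact prove the stronger property that $X/_\sim$ is $T_1$ in the quotient topology, which descends to the subspace $\pi(\Omega_\phi)$, identified with the pseudometric topology supplied by the Willard reference. The key observation is that, for any class $\tilde x$, the complement $X\setminus \tilde x$ is automatically saturated, because equivalence classes partition $X$. So as soon as $\tilde x$ is closed in $X$, its complement is open and saturated, whence its projection $X/_\sim\setminus\{\tilde x\}$ is open in the quotient topology, making $\{\tilde x\}$ closed.

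Closedness of $\tilde x$ will follow by direct inspection of the definition of $\sim$. For any $x\in X$, the class $\tilde x$ is a union of at most three pieces: the singletons $\{x\}$ and $\{I(x)\}$ (the latter present when $x\in D$), together with a preimage of the form $I^{-1}(\{I(x)\})$ if $x\in D$, or $I^{-1}(\{x\})$ if $x\in I(D)\setminus D$. Singletons are closed in the metric space $X$; and since $I$ is continuous on the compact set $D$, preimages of singletons under $I$ are closed in $D$ and therefore closed in $X$. A finite union of closed sets is closed, so $\tilde x$ is closed.

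I foresee no substantial obstacle. The only point worth flagging is that the previous lemma is phrased in terms of the topology generated by the pseudometric, so one implicitly uses that this topology on $\pi(\Omega_\phi)$ coincides with the subspace topology inherited from $X/_\sim$; this is the content of the Willard reference. Everything else depends only on the continuity of $I$ and the compactness of $D$.
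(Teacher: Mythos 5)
Your proposal is correct and follows essentially the same route as the paper: compactness as the continuous image of the compact set $\Omega_\phi$, and the upgrade from pseudometric to metric via the $T_0$ criterion of the preceding lemma, established by showing each equivalence class is a finite union of singletons and of preimages of singletons under the continuous map $I$, hence closed, so the quotient is in fact $T_1$. One minor slip in your enumeration: for $x\in D$ the class also contains $I^{-1}(\{x\})$ (points $y\in D$ with $I(y)=x$), which you list only for $x\in I(D)\setminus D$; this extra piece is closed by the same reasoning, so the argument is unaffected.
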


\begin{proof}
The compactness of $\pi(\Omega_\phi)$ follows from the compactness of $\Omega_\phi$ and the continuity of the projection $\pi$. By the previous lemma, it is enough to show that $\pi(\Omega_\phi)$ with the quotient topology is a~$T_0$ space.
Given $\tilde x \in \pi(\Omega_\phi)$, the subset $C_x$ of $\Omega_\phi$ whose elements are in the equivalence class of~$x$ is a closed set in $\Omega_\phi$. Indeed,
$$
C_x=
\begin{cases}
\{x, I(x)\}\cup I^{-1}(\{x\})\cup I^{-1}(\{I(x)\}), &\text{if $x\in D$};\\
\{x\}\cup I^{-1}(\{x\}), &\text{otherwise.}
\end{cases}
$$
As $\Omega_\phi$ is a compact metric space, each one-point set is closed; moreover, as $I$ is continuous, both $I^{-1}(\{x\})$ and $I^{-1}(\{I(x)\})$ are closed. Thus $C_x$ is a finite union of closed sets, hence closed in $\Omega_\phi$. Therefore, the set $\{\tilde x\}$ is closed in $\pi(\Omega_\phi)$: its complement is open since
$$\pi^{-1}(\pi(\Omega_\phi) \setminus \{\tilde x\})= \Omega_\phi \setminus C_x$$
and $\Omega_\phi \setminus C_x$ is open in $\Omega_\phi$. Hence, given $\tilde y \neq \tilde x$ in $\pi(\Omega_\phi)$, the open set $\pi(\Omega_\phi) \setminus \{\tilde x\}$ contains~$\tilde y$ but not $\tilde x$.
\end{proof}

%   Also, if $d$ denotes the metric on $X$, the metric $\tilde d$ in $\pi(\Omega_\phi)$ that induces the quotient topology is given by
%$$ \tilde d \, (\tilde x,\tilde y) = \inf \, \{d\,(p_1, q_1)+ d\,(p_2, q_2)+\cdots +d\,(p_n,q_n)\},$$
%where $p_1, q_1, \dots, p_n, q_n$ is any chain of points in $\Omega_\phi$ such that
%$ p_1 \sim x$, $q_1 \sim p_2$, $q_2 \sim p_3$, ... $q_n \sim y$.
% In particular, we have $$\tilde d \, (\tilde x,\tilde y) \leq d\,(x,y), \quad\forall\text{ $x,y\in \Omega_\phi$.}$$

\begin{lemma}\label{le.contcomp}
Assume that  $\tau_D$ is continuous and $I(\Omega_\phi \cap D)\subset\Omega_\phi\setminus D$. Then
$\pi\circ\phi_t\vert_{\Omega_\phi\setminus D}$ is continuous for all $t\ge0$.
\end{lemma}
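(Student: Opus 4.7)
The plan is to prove sequential continuity: fix $t\ge 0$ and $x\in\Omega_\phi\setminus D$, and show that for any sequence $(x_n)\subset\Omega_\phi\setminus D$ with $x_n\to x$ one has $\pi(\phi_t(x_n))\to\pi(\phi_t(x))$ in the quotient. This suffices, since $\Omega_\phi\setminus D$ is metrizable and, by Lemma~\ref{le.compame}, so is $\pi(\Omega_\phi)$.

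As a preparatory step I would show that the impulsive times and impulse points depend continuously on the initial condition inside $\Omega_\phi\setminus D$. By Theorem~\ref{the.B}, the $\phi$-orbit of any $x\in\Omega_\phi\setminus D$ stays inside $\Omega_\phi\setminus D$, so each post-impulse point $\phi_{\tau_j(x)}(x)$ lies in $\Omega_\phi\setminus D$, where the continuity hypothesis on $\tau_D$ applies. An induction on $j$ then shows that $\tau_j(x_n)\to\tau_j(x)$ in $[0,+\infty]$ with the one-point compactification topology, and that $\phi_{\tau_j(x_n)}(x_n)\to \phi_{\tau_j(x)}(x)$ in $X$; at each inductive step one invokes the continuity of $\tau_D$ at a point of $\Omega_\phi\setminus D$ together with the continuity of $\varphi$ and $I$.

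Now fix $t\ge 0$ and let $k\ge 0$ be such that $\tau_k(x)\le t<\tau_{k+1}(x)$, with the convention $\tau_0(x)=0$ (degenerate cases $\tau_1(x)=+\infty$ or only finitely many impulses are handled similarly). If $\tau_k(x)<t<\tau_{k+1}(x)$, the preparatory step forces $\tau_k(x_n)<t<\tau_{k+1}(x_n)$ for all large $n$, so
\[
\phi_t(x_n)=\varphi_{t-\tau_k(x_n)}\bigl(\phi_{\tau_k(x_n)}(x_n)\bigr)\longrightarrow \varphi_{t-\tau_k(x)}\bigl(\phi_{\tau_k(x)}(x)\bigr)=\phi_t(x)
\]
in $X$, and applying the continuous projection $\pi$ settles this case. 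The delicate case is $t=\tau_k(x)$ with $k\ge 1$: along the subsequence where $\tau_k(x_n)\le t$ the previous argument still gives $\phi_t(x_n)\to\phi_t(x)$, but along the subsequence where $\tau_k(x_n)>t$ we have
\[
\phi_t(x_n)=\varphi_{t-\tau_{k-1}(x_n)}\bigl(\phi_{\tau_{k-1}(x_n)}(x_n)\bigr)\longrightarrow p:=\varphi_{\tau_k(x)-\tau_{k-1}(x)}\bigl(\phi_{\tau_{k-1}(x)}(x)\bigr),
\]
which is the \emph{pre}-impulse value at $\tau_k(x)$ and lies in $\Omega_\phi\cap D$, while $\phi_t(x)=I(p)$. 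By the very definition of $\sim$ one has $p\sim I(p)$, hence $\pi(p)=\pi(\phi_t(x))$, so continuity of $\pi$ still yields $\pi(\phi_t(x_n))\to\pi(\phi_t(x))$ along this subsequence, and therefore along the full sequence.

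The main obstacle is precisely this jump case $t=\tau_k(x)$: the map $\phi_t\vert_{\Omega_\phi\setminus D}$ is genuinely discontinuous at $x$ in the topology of $X$, because $\tau_k(x_n)$ can approach $\tau_k(x)$ from either side and produce two distinct limits (the pre-impulse point and the post-impulse point $I$ of it). The whole purpose of the quotient $\pi$ is to glue these two points together via $\sim$; once the continuity of $\tau_D$ is used to control the impulsive times, the quotient absorbs the remaining jump, producing the continuity of $\pi\circ\phi_t\vert_{\Omega_\phi\setminus D}$.
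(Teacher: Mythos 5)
Your proof is correct and follows essentially the same route as the paper's: control the impulsive times through the continuity of $\tau_D$ (inducting over impulses, which is legitimate because $I(\Omega_\phi\cap D)\subset\Omega_\phi\setminus D$ keeps every post-impulse point inside $\Omega_\phi\setminus D$, where $\tau_D=\tau_1$ is continuous), and let the identification $p\sim I(p)$ in the quotient absorb the jump at impulse times. The only organizational difference is that by fixing $t$ and arguing sequentially you confine the pre-/post-impulse mismatch to the single instant $t=\tau_k(x)$, so you can dispense with the uniform lower bound $\tau_1>\alpha$ on $I(\Omega_\phi\cap D)$ from \eqref{eq.alpha}, which the paper needs in its Subcase 2.3 to treat the whole time window $[\tau_1(y),\tau_1(x)]$ at once.
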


\begin{proof}
We start with the simple observation that either $\Omega_\phi \cap D = \emptyset$, in which case $\Omega_\phi\setminus D = \Omega_\phi \neq\emptyset$, or we have $I(\Omega_\phi \cap D)\subset\Omega_\phi\setminus D$ and so $\Omega_\phi\setminus D\neq\emptyset$.
As $I$ is continuous and $D$ is compact, then $I(\Omega_\phi\cap D)$ is compact. Now using the assumptions that $I(\Omega_\phi\cap D)\subset \Omega_\phi\setminus D$ and $\tau_1\vert_{\Omega_\phi \setminus D}=\tau_D\vert_{\Omega_\phi \setminus D}$ is strictly positive and continuous, there must be some constant $\alpha>0$ such that
\begin{equation}\label{eq.alpha}
\tau_1(z)>\alpha, \quad\text{for all } z\in I(\Omega_\phi \cap D).
\end{equation}

Given $t>0$,  let us prove the continuity of $\pi\circ\phi_t\vert_{\Omega_\phi\setminus D}$ at any point $x\in\Omega_\phi\setminus D$.  Using an inductive argument on the impulsive times of $x$, it is enough to show that, when $y\in\Omega_\phi\setminus D$ is close to~$x$, then $\pi(\phi_s(y))$ remains close to $\pi(\phi_s(x))$ for all $0\le s\le \tau_1(x)$. Notice that such an inductive argument on the impulsive times can be  applied because we are sure that $I(\Omega_\phi\cap D)\subset \Omega_\phi\setminus D$. The proof follows according to several cases:

\medskip

\noindent\textbf{Case 1}. $\tau_1(x)>t$.
\smallskip

\noindent As $\tau_D$ is continuous and $\tau_1$ coincides with $\tau_D$ in $\Omega_\phi\setminus D$, we must have  $\tau_1(y)>t$ for any point $y\in\Omega_\phi\setminus D$ sufficiently close to $x$. Therefore,  the result follows in this case from the continuity of the semiflow $\varphi$.

\medskip

\noindent\textbf{Case 2}.  $\tau_1(x)  \le  t$.
\smallskip

\noindent
 Given $y\in\Omega_\phi\setminus D$ sufficiently close to $x$, by the continuity of the semiflow~$\varphi$, the $\phi$-trajectories of $x$ and $y$ remain close until one of them hits the set~$D$. At this moment the impulsive function acts and, therefore, their $\phi$-trajectories may not remain close  at this first impulsive time. Now we distinguish three possible subcases:

\medskip

\noindent\textbf{Subcase 2.1}.  $\tau_1(x)=\tau_1(y)$.
\smallskip

\noindent
 The continuous map $I$ keeps the points $I(\varphi_{\tau_1(x)}(x)) $ and $ I(\varphi_{\tau_1(x)}(y))$ close, and this implies that  $\phi_s(x)$ and $\phi_s(y)$  remain close for all $0\le s\le \tau_1(x)$.

\medskip

\noindent\textbf{Subcase 2.2}.  $\tau_1(x)< \tau_1(y)$.
\smallskip

\noindent  By the continuity of $\varphi$ we have $\varphi_s(y)$ close to $\varphi_s(y)$ for~$y$ sufficiently close to~$x$ and $0\le s\le\tau_1(x)$.  This in particular implies that $\phi_s(y)$ is close to $\phi_s(y)$  for $0\le s<\tau_1(x)$.
It remains to check that $\pi(\phi_{\tau_1(x)}(y))$  is close to $\pi(\phi_{\tau_1(x)}(x))$. This is clearly true because
 $\varphi_{\tau_1(x)}(y)$ is close to  $\varphi_{\tau_1(x)}(x)$, and so
 $$\pi(\phi_{\tau_1(x)}(y))=\pi(\varphi_{\tau_1(x)}(y))$$ is close to
 $$\pi(\varphi_{\tau_1(x)}(x)) =\pi(I(\varphi_{\tau_1(x)}(x)))=\pi(\phi_{\tau_1(x)}(x)).$$

\medskip

\noindent\textbf{Subcase 2.3}.  $\tau_1(x)> \tau_1(y)$.
\smallskip

\noindent
By the continuity of $\varphi$ we have $\varphi_s(y)$ close to $\varphi_s(y)$ for~$y$ sufficiently close to~$x$ and $0\le s<\tau_1(y)$. This in particular implies that $\phi_s(y)$ is close to $\phi_s(y)$  for $0\le s<\tau_1(y)$. It remains to check that $\pi(\phi_{s}(y))$  is close to $\pi(\phi_{s}(x))$ for $\tau_1(y)\le s\le \tau_1(x)$.
By Lemma~\ref{le.omegaD0} and the definition of  first impulsive time we have
$$\varphi_{\tau_1(y)}(y) \in \Omega_\phi \cap D .$$%\quad\text{and}\quad \varphi_{\tau_1(y)}(x)  \in \Omega_\phi \setminus D.$$
%Observe that $x_1$ is accumulated by $\phi_t(p_1)$ for $0 < t < \tau_1(p_1)$, in this interval the points $\phi_t(p_1)$ are in $\Omega_\phi$ due to Lemma~\ref{le.omegaD2}, and $\Omega_\phi$ is closed.
 As we are assuming  $I(\Omega_\phi \cap D)\subset \Omega_\phi\setminus D$, we have in particular
$\phi_{\tau_1(y)}(y)=I(\varphi_{\tau_1(y)}(y)) \in \Omega_\phi,$
which by \eqref{eq.alpha} gives
 $$\tau_1(\phi_{\tau_1(y)}(y))>\alpha.$$
Using that $\tau_D$ is continuous at~$x$, we have
$\tau_D(x)-\tau_D(y) $ small for $y$ close to $x$.
We may have in particular
$$\tau_D(x)-\tau_D(y) <\alpha$$
Hence, for $\tau_1(y)\le s\le\tau_1(x)$ we have
$$\phi_s(y)=\varphi_{s-\tau_1(y)}(\phi_{\tau_1(y)}(y))=\varphi_{s-\tau_1(y)}(I(\varphi_{\tau_1(y)}(y)))$$
Observing that $s-\tau_1(y)\le \tau_1(x)-\tau_1(y)$ is close to $0$ for $y$ close to $x$, we have
$$\varphi_{s-\tau_1(y)}(I(\varphi_{\tau_1(y)}(y)))\quad\text{ close to }\quad I(\varphi_{\tau_1(y)}(y)).$$
Hence for $\tau_1(y)\le s\le\tau_1(x)$ we have
$$\pi(\phi_s(y))\quad\text{ close to }\quad\pi(I(\varphi_{\tau_1(y)}(y)))=\pi(\varphi_{\tau_1(y)}(y)).$$
Now we just need to notice that,  for $\tau_1(y)\le s\le\tau_1(x)$,  we have
$\varphi_{\tau_1(y)}(y)$ close to
$\varphi_{s}(y)$ which is itself close to $\varphi_{s}(x)$.
This way, we get, for $\tau_1(y)\le s\le\tau_1(x)$,
 $$\pi(\varphi_{\tau_1(y)}(y)) \quad\text{ close to }\quad  \pi(\varphi_{s}(x))= \pi(\phi_{s}(x)).$$
Lastly, recall that for $s=\tau_1(x)$ we have $  \pi(\varphi_{\tau_1(x)}(x))=\pi(I(\varphi_{\tau_1(x)}(x)))= \pi(\phi_{\tau_1(x)}(x)). $
\end{proof}

The first part of Theorem~\ref{the.extra} follows from the next proposition taking $Y=\pi(\Omega_\phi)$.

\begin{proposition}\label{pr.omega}
Assume that  $\tau_D$ is continuous and $I(\Omega_\phi \cap D)\subset\Omega_\phi\setminus D$. Then $\pi\vert_{\Omega_\phi\setminus D}$ is a continuous bimeasurable bijection onto $\pi(\Omega_\phi)$ and there exists a continuous semiflow $\psi:\R^+_0\times \pi(\Omega_\phi)\to\pi(\Omega_\phi)$ such that
   for all $ t\ge 0$
     \begin{equation}\label{eq.conjuga}
\psi_t\circ \pi\vert_{\Omega_\phi\setminus D} = \pi\circ \phi_t\vert_{\Omega_\phi\setminus D}.
\end{equation}
\end{proposition}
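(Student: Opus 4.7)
My plan is to establish, in order, the bijectivity of $h:=\pi|_{\Omega_\phi\setminus D}$, its bimeasurability, and the existence of the continuous semiflow $\psi$.

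Bijectivity and continuity are quick. Continuity of $h$ is inherited from $\pi$. For injectivity, if $x,y\in\Omega_\phi\setminus D$ satisfy $x\sim y$, the three nontrivial clauses in the definition of $\sim$ all require one of the points to lie in $D$, so $x=y$. Surjectivity uses the hypothesis $I(\Omega_\phi\cap D)\subset\Omega_\phi\setminus D$: any $\tilde x=\pi(y)$ with $y\in\Omega_\phi\cap D$ also equals $\pi(I(y))$, and $I(y)\in\Omega_\phi\setminus D$.

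For bimeasurability I will consider the family $\mathcal{B}=\{A\subset\Omega_\phi\setminus D:\pi(A)\text{ is Borel in }\pi(\Omega_\phi)\}$. Because $h$ is a bijection, $\pi$ commutes with relative complement in $\Omega_\phi\setminus D$ and with countable unions, so $\mathcal{B}$ is a $\sigma$-algebra. To show that $\mathcal{B}$ contains every closed subset of $\Omega_\phi\setminus D$, I will write such a set as $F\cap(\Omega_\phi\setminus D)$ with $F\subset\Omega_\phi$ closed and decompose it as $\bigcup_n(F\cap K_n)$ with $K_n=\{z\in\Omega_\phi:\dist(z,D)\ge 1/n\}$ compact; then $\pi(F\cap(\Omega_\phi\setminus D))=\bigcup_n\pi(F\cap K_n)$ is an $F_\sigma$ in the compact metric space $\pi(\Omega_\phi)$, hence Borel.

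Theorem~\ref{the.B} ensures $\phi_t(\Omega_\phi\setminus D)\subset\Omega_\phi\setminus D$ for all $t\ge 0$, so I will set $\psi_t:=h\circ\phi_t\circ h^{-1}$ on $\pi(\Omega_\phi)$; the semiflow axioms and the conjugacy identity then follow formally from those of $\phi$ and from the bijectivity of $h$. The main obstacle is continuity of $\psi$, since $h^{-1}$ is a priori only measurable. I will argue by sequential continuity in the compact metric space $\pi(\Omega_\phi)$: given $\tilde x_n\to\tilde x$, lift to $y_n\in\Omega_\phi$ with $\pi(y_n)=\tilde x_n$ and pass to a subsequence converging to some $y_\star\in\Omega_\phi$ with $\pi(y_\star)=\tilde x$. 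If $y_\star\in\Omega_\phi\setminus D$, then eventually $y_n\in\Omega_\phi\setminus D$ and Lemma~\ref{le.contcomp} applied at $y_\star$ gives $\pi(\phi_t(y_n))\to\psi_t(\tilde x)$. If $y_\star\in\Omega_\phi\cap D$, so that $h^{-1}(\tilde x)=I(y_\star)$, I split the sequence: for $y_n\in D$, continuity of $I$ yields $I(y_n)\to I(y_\star)$ and Lemma~\ref{le.contcomp} closes the case; for $y_n\notin D$, the continuity of $\tau_D$ at $y_\star$, where $\tau_D(y_\star)=0$, forces $\tau_1(y_n)\to 0$, so that $\phi_{\tau_1(y_n)}(y_n)=I(\varphi_{\tau_1(y_n)}(y_n))\to I(y_\star)$ in $\Omega_\phi\setminus D$, and the semiflow identity $\phi_t(y_n)=\phi_{t-\tau_1(y_n)}(\phi_{\tau_1(y_n)}(y_n))$ for large $n$ lets me pass to the limit via a joint-continuity enhancement of Lemma~\ref{le.contcomp}, obtained by tracking the time variable alongside the space variable in its case analysis. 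Joint continuity of $\psi$ in $(t,\tilde x)$ follows from the same analysis.
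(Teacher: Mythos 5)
Your first three steps are sound. Bijectivity and continuity of $h=\pi\vert_{\Omega_\phi\setminus D}$ are argued exactly as in the paper. For bimeasurability you depart from the paper in a useful way: instead of invoking Purves' theorem on images of Borel sets under injective continuous maps, you exhaust $\Omega_\phi\setminus D$ by the compact sets $K_n=\{z\in\Omega_\phi:\dist(z,D)\ge 1/n\}$, observe that the image of each closed set of the subspace is then an $F_\sigma$ in the metric space $\pi(\Omega_\phi)$ (Lemma~\ref{le.compame} is needed here to know compact images are closed), and conclude via the $\sigma$-algebra $\mathcal B$, which is closed under complements precisely because $h$ is a bijection. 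This is correct, elementary and self-contained; it is a genuine improvement in accessibility over the citation.

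The gap is in the continuity of $\psi$. In the subcase where $\tilde x_n\to\tilde x$, the chosen lifts satisfy $y_n\notin D$ but $y_n\to y_\star\in\Omega_\phi\cap D$, you correctly reduce to showing $\pi(\phi_{s_n}(w_n))\to\pi(\phi_t(w_\star))$ with $w_n=I(\varphi_{\tau_1(y_n)}(y_n))\to w_\star=I(y_\star)$ in $\Omega_\phi\setminus D$ and $s_n=t-\tau_1(y_n)\to t$. To close this you appeal to ``a joint-continuity enhancement of Lemma~\ref{le.contcomp}, obtained by tracking the time variable alongside the space variable,'' and you again defer to ``the same analysis'' for joint continuity of $\psi$. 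That enhancement is not a routine remark: joint continuity of $(s,w)\mapsto\pi(\phi_s(w))$ on $\R^+_0\times(\Omega_\phi\setminus D)$ is essentially equivalent to the joint continuity of $\psi$ itself, i.e.\ to the hardest part of the proposition, and Lemma~\ref{le.contcomp} as stated only gives continuity in $w$ for each fixed $s$. Its proof is a delicate case analysis around the impulsive times (Subcases 2.1--2.3), and letting $s$ vary there requires reworking, not just ``tracking''; until that is written out, the main analytic content of the statement is unproved. The paper avoids lifting sequences altogether: it defines $\psi_t$ via the identity $\psi_t\circ\pi\vert_{\Omega_\phi\setminus D}=\pi\circ\phi_t\vert_{\Omega_\phi\setminus D}$ and deduces continuity of $\psi_t$ from the quotient-topology characterization (a map out of the quotient is continuous iff its composition with $\pi$ is), so that Lemma~\ref{le.contcomp} applies verbatim, and it treats continuity in $t$ separately using right-continuity of the impulsive trajectories and the relation $\pi(\varphi_{t_0}(x))=\pi(I(\varphi_{t_0}(x)))$ at impulsive times. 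You should either prove the joint-continuity version of Lemma~\ref{le.contcomp} you rely on, or switch to the quotient-topology route for continuity in $\tilde x$ and handle continuity in $t$ separately as the paper does.
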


\begin{proof}
Assuming that $I(\Omega_\phi \cap D)\subset \Omega_\phi\setminus D$,  from the definition of the equivalence relation one easily deduces that
$$\pi(\Omega_\phi\setminus D)=\pi(\Omega_\phi).$$
Additionally,  for any $x,y\in \Omega_\phi\setminus D$ we have $x\sim y$ if and only if $x=y$.
This shows that $\pi\vert_{\Omega_\phi\setminus D}$ is a continuous bijection (not necessarily a homeomorphism) from $\Omega_\phi\setminus D$ onto $\pi(\Omega_\phi)$.
On the other hand, as $\pi\vert_{\Omega_\phi \setminus D}$ is injective, the image under the continuous map  $\pi$ of any Borel set is a Borel set; see \cite{P66}.
Moreover, from Theorem~\ref{the.B} we have  $\phi_t(\Omega_\phi\setminus D)\subset \Omega_\phi\setminus D$ for all $t\ge 0$. Then, setting
 $$\psi(t,\tilde x)=\pi(\phi(t,x))$$
 for
each $x\in\Omega_\phi\setminus D$ and $t\ge 0$,
we have that $\psi:\R^+\times \pi(\Omega_\phi )\to \pi(\Omega_\phi)$ is well defined and
obviously satisfies for all $t\ge0$
\begin{equation}\label{eq.conjuga2}
\psi_t\circ \pi\vert_{\Omega_\phi\setminus D} = \pi\circ \phi_t\vert_{\Omega_\phi\setminus D}.
\end{equation}
We are left to prove that $\psi$ is continuous. Considering for each $\tilde x\in \pi(\Omega_\phi )$ the map $\psi^{\tilde x}: \R^+_0\to \pi(\Omega_\phi )$ defined by
 $$\psi^{\tilde x}(t)=\psi(t,\tilde x),$$
it  is enough to prove that $\psi^{\tilde x}$ and $\psi_t$ are continuous for all $\tilde x\in \pi(\Omega_\phi )$ and all $t\ge0$.

Let  us start by proving the continuity of $\psi^{\tilde x}$ for   $x\in \Omega_\phi\setminus D$.
Consider first $t_0\ge0$  which is not an impulsive time for $x$. In this case we have, for $t$ in a sufficiently small neighborhood of $t_0$ in $\R^+_0$,
 $$\psi^{\tilde x}(t)=\pi(\varphi(t,x))$$
and, as $\varphi$ is continuous, this obviously gives the continuity of $\psi^{\tilde x}$ at $t_0$. On the other hand, if $t_0$ is an impulsive time for $x$, then we have
 $$\lim_{t\to t_0^-}\psi^{\tilde x}(t)=\lim_{t\to t_0^-}\pi(\phi(t,x))=\lim_{t\to t_0^-}\pi(\varphi(t,x))
= \pi(\varphi(t_0,x)).$$
 As $\varphi(t_0,x)\in D$,  it follows from the definition of $\phi(t_0,x)$  and the equivalence relation that yields the projection $\pi$ that
   $$ \pi(\varphi(t_0,x))=\pi(I(\varphi(t_0,x)))=\pi(\phi(t_0,x))=\psi^{\tilde x}(t_0).$$
This gives the continuity of $\psi^{\tilde x}$ on the left hand side of $t_0$. The continuity on the right hand side of $t_0$ follows easily from the fact that, by definition, the impulsive trajectories  are continuous on the right hand side.

Let us now prove the continuity of $\psi_t$ for $t\ge0$.
Notice that as we are considering the quotient topology in $\pi(\Omega_\phi)=\pi(\Omega_\phi\setminus D)$, we know that $\psi_t$ is
continuous if and only if $\psi_t\circ\pi\vert_{\Omega_\phi\setminus D}$ is continuous.
The continuity of $\psi_t\circ\pi\vert_{\Omega_\phi\setminus D}$ is an immediate consequence of Lemma~\ref{le.contcomp} and \eqref{eq.conjuga2}.
\end{proof}

\section{Invariant probability measures}\label{se.ipm}

In this section we prove the second parts of both Theorem~\ref{the.A} and Theorem~\ref{the.extra}. We  start with a general result on the measurability of impulsive semiflows.

\begin{proposition}
If $\phi$ is the semiflow of  an impulsive dynamical system $(X,\varphi, D, I)$, then $\phi$ is measurable.
\end{proposition}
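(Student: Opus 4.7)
The plan is to decompose the time axis according to the impulsive times, observing that between consecutive impulses the semiflow $\phi$ agrees (up to a time-shift) with a continuous composition of $\varphi$ and $I$, and then to patch the pieces together using the fact that all the data involved are Borel measurable. More precisely, I would set $\tau_0 \equiv 0$ and $\xi_0 = \mathrm{id}_X$, and define inductively, on the set where the previous impulsive time is finite,
\begin{equation*}
\xi_n(x) = I\bigl(\varphi_{\tau_1(\xi_{n-1}(x))}(\xi_{n-1}(x))\bigr), \qquad \tau_n(x) = \tau_{n-1}(x) + \tau_1(\xi_{n-1}(x)),
\end{equation*}
so that $\xi_n(x) = \gamma_x(\tau_n(x))$. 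Since $T(x)=+\infty$ for every $x \in X$, the sets
\begin{equation*}
A_n = \{(t,x)\in\R^+_0\times X : \tau_n(x)\le t<\tau_{n+1}(x)\}, \quad n\ge 0,
\end{equation*}
partition $\R^+_0\times X$, and on $A_n$ one has $\phi(t,x)=\varphi_{t-\tau_n(x)}(\xi_n(x))$.

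The first task is to verify that $\tau_1:X\to[0,+\infty]$ is Borel measurable. The set $E=\{(s,x)\in(0,\infty)\times X:\varphi_s(x)\in D\}$ is closed in $(0,\infty)\times X$ because $\varphi$ is continuous and $D$ is closed. For every $\alpha>0$,
\begin{equation*}
\{x\in X:\tau_1(x)<\alpha\}=\pi_X\Bigl(E\cap\bigl((0,\alpha)\times X\bigr)\Bigr)=\bigcup_{k\in\N}\pi_X\bigl(E\cap([1/k,\alpha-1/k]\times X)\bigr),
\end{equation*}
and each summand is the projection of a compact set onto $X$, hence compact. Therefore $\{x:\tau_1(x)<\alpha\}$ is $F_\sigma$ and $\tau_1$ is Borel measurable on all of $X$ (this actually refines the lower semicontinuity statement from \cite{C04a} to cover points of $D$ as well). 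From this and the continuity of $\varphi$ and $I$, an easy induction shows that every $\tau_n$ is Borel measurable on $\{x:\tau_{n-1}(x)<\infty\}$ and every $\xi_n$ is a Borel measurable map into $X$; one simply reads off the recursive formulas above.

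Once this is in place, each $A_n$ is a Borel subset of $\R^+_0\times X$, and the restriction $\phi|_{A_n}$ coincides with the composition
\begin{equation*}
(t,x)\longmapsto\bigl(t-\tau_n(x),\,\xi_n(x)\bigr)\longmapsto \varphi\bigl(t-\tau_n(x),\,\xi_n(x)\bigr),
\end{equation*}
which is Borel measurable because the first arrow is Borel (using measurability of $\tau_n$ and $\xi_n$) and the second is continuous. Consequently, for any Borel set $B\subset X$,
\begin{equation*}
\phi^{-1}(B)=\bigcup_{n\ge 0}\bigl(\phi|_{A_n}\bigr)^{-1}(B)
\end{equation*}
is a countable union of Borel sets, so $\phi$ is measurable.

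The step I expect to require the most care is the measurability of $\tau_1$ on $D$, since the cited result in \cite{C04a} only gives lower semicontinuity on $X\setminus D$; the projection-of-closed-sets trick outlined above is what bridges this gap. After that, the induction on $n$ and the patching on the $A_n$'s are essentially routine once one is careful about the sets where $\tau_n=+\infty$, on which the corresponding $A_n$ may be empty and therefore cause no problem.
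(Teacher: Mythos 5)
Your proposal is correct and follows essentially the same route as the paper: partition $\R^+_0\times X$ according to the impulsive times and express $\phi$ on each piece as a composition of Borel maps built from $\varphi$, $I$ and the $\tau_n$. The one place you go beyond the paper is welcome: the paper simply asserts the measurability of the impulsive time functions, whereas your projection argument, writing $\{x:\tau_1(x)<\alpha\}$ as a countable union of projections of compact subsets of $\varphi^{-1}(D)$ (which uses that $D$ is closed and $X$ compact), actually substantiates that claim, including at points of $D$ where the cited lower semicontinuity result does not apply.
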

\begin{proof}
We start by observing  that, as $\varphi$ is continuous and  $D$ is a Borel set, then the  impulsive time functions $\tau_1,\tau_2,\dots$ are all measurable.
%For the sake of notational simplicity, it will be convenient to consider $\tau_0(x)=0$ for all $x\in X$.
We define
 $$S_0=\left\{ (t,x)\in \R^+_0\times X: 0\le t<\tau_{1}(x)\right\} $$
and, for each $n\ge 1$,
\begin{eqnarray*}
S_n &=& \left\{ (t,x)\in \R^+_0\times X: \tau_{n}(x)<t<\tau_{n+1}(x)\right\} \\
T_n &=& \left\{ (t,x)\in \R^+_0\times X: \tau_n(x)=t\right\}.
\end{eqnarray*}
Notice that,  by the measurability of the impulsive time functions, these are Borel sets.

Now, given any Borel set $A\subset X$, we may write $\phi^{-1}(A)$ as a disjoint union of sets of the types
 $$\phi^{-1}(A)\cap S_0, \quad \phi^{-1}(A)\cap S_n\quad\text{and}\quad \phi^{-1}(A)\cap T_n, \quad n\ge 1.$$
 Moreover,
 $$ \phi^{-1}(A)\cap S_0 = \left\{(t,x)\in \R^+_0\times X: \varphi_{t}(x)\in A \right\} \cap S_0$$
  and, for each $n\ge1$,
\begin{eqnarray*}
 \phi^{-1}(A)\cap S_n & = &  \left\{ (t,x)\in S_n:  \varphi_{t-\tau_{n}(x)}\circ I \circ \varphi_{\tau_n(x)-\tau_{n-1}(x)} \circ \cdots \circ I\circ\varphi_{\tau_1(x)}(x)\in A \right\} \\
 \phi^{-1}(A)\cap T_n & = &  \left\{ (t,x)\in T_n: I \circ \varphi_{\tau_n(x)-\tau_{n-1}(x)} \circ  \cdots \circ I\circ\varphi_{\tau_1(x)}(x)\in A \right\}.
\end{eqnarray*}
Taking into account the measurability of the functions $\varphi$, $I$ and $\tau_n$, for $n\ge 1$, all the sets considered above are Borel sets. This ensures the measurability of $\phi$.
\end{proof}

In the next result we show that, under the assumptions of Theorem~\ref{the.A}, we have $\mu(D)=0$ for any $\phi$-invariant measure $\mu$, thus obtaining the second conclusion of Theorem~\ref{the.A}. Notice that, assuming $I(D)\cap D=\emptyset$, we can prove this assertion quite easily. Indeed, supposing by contradiction  that $\mu(D)>0$,  it follows from Poincar\'e Recurrence Theorem that for~$\mu$ almost every $x\in D$ there are infinitely many moments $t>0$ such that
 $\phi_t(x)\in  D.
$
Clearly, if  $I(D)\cap D=\emptyset$, then the $\phi$-trajectories do not hit $D$ for $t>0$, and so we reach a contradiction.

\begin{lemma}\label{le.mud}
 Let $\mu$ be an invariant probability measure for the semiflow $\phi$.
If  $\tau_D$ is continuous and $I(\Omega_\phi \cap D)\subset \Omega_\phi\setminus D$, then $\mu(D)=0$.
\end{lemma}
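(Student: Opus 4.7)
My plan is to combine Theorem~\ref{the.B} with Fubini's theorem, trading a $t$-by-$t$ invariance identity for an almost-every-$x$ statement that conflicts with $\tau_1>0$. As a preliminary reduction, Lemma~\ref{le.suporte} says the support of $\mu$ lies in $\Omega_\phi$, so $\mu(D)=\mu(\Omega_\phi\cap D)$ and the task reduces to showing $\mu(\Omega_\phi\cap D)=0$.

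The first key observation is the set inclusion
\[
\phi_t^{-1}(D)\cap \Omega_\phi \;\subset\; \Omega_\phi\cap D, \qquad \forall\, t\ge 0,
\]
which follows immediately from Theorem~\ref{the.B}: any $y\in \Omega_\phi\setminus D$ has $\phi_t(y)\in\Omega_\phi\setminus D$, so $\phi_t(y)\notin D$. Combining this with the $\phi$-invariance of $\mu$ and with $\mu(\Omega_\phi)=1$ yields
\[
\mu(D)\;=\;\mu(\phi_t^{-1}(D))\;=\;\mu(\phi_t^{-1}(D)\cap\Omega_\phi)\;\le\;\mu(\Omega_\phi\cap D)\;=\;\mu(D),
\]
and equality forces $\mu\bigl((\Omega_\phi\cap D)\setminus \phi_t^{-1}(D)\bigr)=0$ for every $t\ge 0$. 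Equivalently, for each fixed~$t$, $\phi_t(x)\in D$ for $\mu$-almost every $x\in \Omega_\phi\cap D$.

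Next I would swap quantifiers by Fubini. Using the joint measurability of $\phi$ just established, the set
\[
E \;=\;\bigl\{(t,x)\in[0,1]\times(\Omega_\phi\cap D): \phi_t(x)\notin D\bigr\}
\]
is Borel, each of its $t$-slices is $\mu$-null, and therefore $E$ has $\lambda\otimes\mu$-measure zero, where $\lambda$ is Lebesgue measure on $[0,1]$. Hence for $\mu$-almost every $x\in\Omega_\phi\cap D$, the set $\{t\in[0,1]:\phi_t(x)\notin D\}$ has Lebesgue measure zero.

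This will force a contradiction from the very definition of an impulsive dynamical system: since $\tau_1(x)>0$ for every $x\in X$ and $\phi_t(x)=\varphi_t(x)\notin D$ for $0<t<\tau_1(x)$, the ``bad time set'' above always contains the interval $\bigl(0,\min\{1,\tau_1(x)\}\bigr)$, which has strictly positive Lebesgue measure. This is incompatible with the Fubini conclusion unless $\mu(\Omega_\phi\cap D)=0$, whence $\mu(D)=0$. The only delicate point is the Fubini step, which depends critically on the joint measurability of $\phi$ just proved; once that is in hand, everything else is a short chain of inclusions and equalities.
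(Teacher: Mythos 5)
Your proof is correct, and it takes a genuinely different route from the paper's. You derive from Theorem~\ref{the.B} the inclusion $\phi_t^{-1}(D)\cap\Omega_\phi\subset\Omega_\phi\cap D$, squeeze it against the invariance identity $\mu(\phi_t^{-1}(D))=\mu(D)$ and Lemma~\ref{le.suporte} to conclude that, for each fixed $t$, almost every point of $\Omega_\phi\cap D$ lands back in $D$ at time $t$, and then apply Fubini to the set $E=\{(t,x)\in[0,1]\times(\Omega_\phi\cap D):\phi_t(x)\notin D\}$ to contradict $\tau_1>0$; the Fubini step is legitimate because $X$ is a separable metric space, so the Borel $\sigma$-algebra of $[0,1]\times X$ is the product $\sigma$-algebra, and the joint measurability of $\phi$ established just before the lemma makes $E$ product-measurable. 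The paper argues differently: it decomposes $\Omega_\phi\cap D$ into the sets $D_n=\{\tau_1>1/n\}$, uses the continuity of $\tau_D$ together with a compactness argument to produce $\epsilon_n>0$ with $\phi_t(D_n)\cap\Omega_\phi=\emptyset$ for all $0<t\le\epsilon_n$, and then combines inner regularity of $\mu$ with invariance and Lemma~\ref{le.suporte}. Notably, the two arguments use complementary halves of the stated hypotheses: yours needs only $I(\Omega_\phi\cap D)\subset\Omega_\phi\setminus D$ (through Theorem~\ref{the.B}) and never invokes the continuity of $\tau_D$, whereas the paper's proof of this particular lemma uses only the continuity of $\tau_D$ and does not rely on the condition on $I$. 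Your version therefore establishes the conclusion under a strictly weaker hypothesis, at the cost of importing Theorem~\ref{the.B} and a product-measure argument; the paper's stays within elementary regularity of measures but cannot dispense with the continuity assumption.
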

\begin{proof}
Assume, by contradiction,  that $\mu(D)>0$. Since the support of $\mu$ is contained in $\Omega_\phi$, by Lemma~\ref{le.suporte}, one necessarily has $\mu(X\setminus \Omega_\phi)=0$. This implies that
 \begin{equation}\label{eq.omegD}
\mu(\Omega_\phi\cap D)>0.
\end{equation}
Defining, for each $n\ge 1$,
 $$D_n=\left\{x\in \Omega_\phi\cap D: \tau_1(x)>\frac1n\right\},$$
and observing that $\tau_1$ is strictly positive, by assumption, we clearly have $$\Omega_\phi\cap D=\bigcup_{n\ge 1} D_n.$$
 We claim that, for each $n\ge 1$, there must be some $0<\epsilon_n<1/n$ such that
  \begin{equation}\label{eqphiomega}
\phi_{t}(D_n)\cap \Omega_\phi=\emptyset,\quad\text{for all $0< t\le\epsilon_n$}.
\end{equation}
Actually, assuming by contradiction that \eqref{eqphiomega} does not hold,  there are  sequences  $x_k\in D_n$ and $t_k\to 0$, with $0<t_k<1/n$,  such that $\phi_{t_k}(x_k)\in\Omega_\phi$ for all $k\ge1$. Using that $\Omega_\phi\cap D$ is compact and taking a converging subsequence, if necessary, we may assume that $\phi_{t_k}(x_k)$ converges to some point $x\in \Omega_\phi$ when $k\to\infty$. Then, the continuity of $\tau_D$ yields
 \begin{equation}\label{eq.conver}
\tau_D( \phi_{t_k}(x_k) )\longrightarrow\tau_D(x)=0,\quad\text{as $k\to\infty$}.
\end{equation}
Now, observing that for $x_k\in D_n$ we must  have $\phi_{t}(x_k) \notin D$ for all $0<t<1/n$, it follows that
 $$
 \tau_D( \phi_{t_k}(x_k) )  = \tau_1( \phi_{t_k}(x_k) )>\frac1n-t_k.
  $$
  Recalling that $t_k\to0$, this last formula gives a contradiction with \eqref{eq.conver}, and so \eqref{eqphiomega} must hold.

  Now, from the regularity of measures defined on compact metric spaces, for each $n\ge 1$ there is a compact set $K_n\subset D_n$ such that
  $$\mu(D_n)\le \mu(K_n)+ \frac{\mu(\Omega_\phi\cap D)}{2^{n+1}}.$$
  Hence
   \begin{equation}\label{eq.ok}
\mu(\Omega_\phi\cap D)  \le \sum_{n\ge 1}\mu(D_n)\le
\sum_{n\ge 1}\mu(K_n) +  \frac{\mu(\Omega_\phi\cap D)}{2}.
\end{equation}
Observing that as $K_n$ is compact and $\phi_{\epsilon_n}$ is continuous, then $\phi_{\epsilon_n}(K_n)$ is compact  and thus a Borel set, we may write
$$
\sum_{n\ge 1}\mu(K_n)\le \sum_{n\ge 1}\mu(\phi_{\epsilon_n}^{-1}(\phi_{\epsilon_n}(K_n)) )=  \sum_{n\ge 1}\mu(\phi_{\epsilon_n}(K_n)),
 $$
 where in this last equality we have used the $\phi$-invariance of $\mu$.
Taking also into account that \eqref{eqphiomega} necessarily implies that $\phi_{\epsilon_n}(K_n)\subset X\setminus  \Omega_\phi$, it follows that $\mu(\phi_{\epsilon_n}(K_n))=0$  for all $n\ge1$, by Lemma~\ref{le.suporte}. From \eqref{eq.ok} we get
 $$
 \mu(\Omega_\phi\cap D)  \le   \frac{\mu(\Omega_\phi\cap D)}{2},
$$
 which gives a contradiction with \eqref{eq.omegD}.
  \end{proof}

We are now ready to conclude the proof of Theorem~\ref{the.extra}.  Let  $\phi$ be the semiflow of an impulsive dynamical system $(X,\varphi, D, I)$  for which
$\tau_D$ is continuous and    $I(\Omega_\phi \cap D)\subset \Omega_\phi\setminus D$.
Consider
 $$
\begin{array}{ccc}
       h\, :\, \Omega_\phi \setminus D& \longrightarrow &\pi(\Omega_\phi) \\
         x & \longmapsto & \pi(x)
        \end{array}$$
the continuous  bimeasurable bijection and
$\psi$ the continuous semiflow in $\pi(\Omega_\phi)$  given by Proposition~\ref{pr.omega}. Consider also  $\iota:\Omega_\phi\setminus D \to X$ the inclusion map.

 Before we state our final lemma, let us list some useful properties about push-forwards which are a straightforward  consequence of the  definitions given in Subsection~\ref{se.im}:

\begin{enumerate}
\item[(P1)] If $f:X_1\to X_2$ and $g:X_2\to X_3$ are measurable, then
\begin{equation*}\label{eq.fg}
(g\circ f)_*=g_*\circ f_*.
\end{equation*}
\item[(P2)] If $f:X_1\to X_2$ is invertible and bimeasurable, then $f_*$ is invertible and
 \begin{equation*}\label{eq.invertible}
f_*^{-1}=(f^{-1})_*.
\end{equation*}
\item[(P3)]  If  $f:X_1\to X_2$  is measurable and $\phi^1, \phi^2$ are semiflows in $X_1, X_2$, respectively,  such that for all $t\ge0$
 $$\phi^2_t\circ f=f\circ \phi^1_t,$$
 then
\begin{equation*}%\label{eq.conjpush}
\mu\in\mathcal M_{\phi^1}(X_1)\quad\Rightarrow\quad f_* \mu\in\mathcal M_{\phi^2}(X_2).
\end{equation*}
\end{enumerate}

%Before we state our next result, let us see some useful properties about push-forwards. It easily follows from the definition that if $f:X_1\to X_2$ and $g:X_2\to X_3$ are measurable maps, then
%\begin{equation}\label{eq.fg}
%(g\circ f)_*=f_*\circ g_*.
%\end{equation}
%Moreover, supposing that $f$ is bimeasurable, then $f_*$ is invertible and
% \begin{equation}\label{eq.invertible}
%f_*^{-1}=(f^{-1})_*.
%\end{equation}
% Finally, if $\phi^1$ and $\phi^2$ are semiflows in $X_1$ and $X_2$, respectively, such that
% $$\phi^1_t\circ f=f\circ \phi^2_t,\quad \text{for all $t\ge0$},$$
% then
%\begin{equation}\label{eq.conjpush}
%\mu\in\mathcal M_{\phi^2}(X_2)\quad\Rightarrow\quad f_* \mu\in\mathcal M_{\phi^1}(X_1).
%\end{equation}

The second part of Theorem~\ref{the.extra} follows immediately from Lemma~\ref{le.compame} and the next result with $Y=\pi(\Omega_\phi)$.

 \begin{lemma}\label{eq.conjpush}
$ (\iota\circ h^{-1})_*: \mathcal M_\psi(\pi(\Omega_\phi))  \longrightarrow \mathcal M_\phi(X)$ is well defined and is a bijection.
 \end{lemma}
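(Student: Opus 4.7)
The plan is to prove the three assertions (well-definedness, injectivity, surjectivity) by leveraging the conjugacy in \eqref{eq.conjuga}, the bimeasurability of $h$ from Proposition~\ref{pr.omega}, and the fact, furnished by Lemmas~\ref{le.suporte} and~\ref{le.mud}, that every $\mu\in\mathcal M_\phi(X)$ is concentrated on $\Omega_\phi\setminus D$. I will use property (P3) to handle invariance and properties (P1)--(P2) to rearrange the push-forwards algebraically.

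For \emph{well-definedness}, I will first observe that $\iota\circ h^{-1}:\pi(\Omega_\phi)\to X$ intertwines the two semiflows, i.e.\ for all $t\ge0$,
\begin{equation*}
\phi_t\circ(\iota\circ h^{-1}) \;=\; (\iota\circ h^{-1})\circ\psi_t.
\end{equation*}
Indeed, Theorem~\ref{the.B} ensures $\phi_t(\Omega_\phi\setminus D)\subset\Omega_\phi\setminus D$, so $\phi_t\circ\iota=\iota\circ\phi_t|_{\Omega_\phi\setminus D}$; then \eqref{eq.conjuga} gives $\phi_t|_{\Omega_\phi\setminus D}\circ h^{-1}=h^{-1}\circ\psi_t$, and the two identities combine. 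Since $\iota\circ h^{-1}$ is measurable, property (P3) then guarantees $(\iota\circ h^{-1})_*\nu\in\mathcal M_\phi(X)$ whenever $\nu\in\mathcal M_\psi(\pi(\Omega_\phi))$.

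For \emph{injectivity}, I will factor $(\iota\circ h^{-1})_*=\iota_*\circ(h^{-1})_*$ via property (P1). By property (P2), $(h^{-1})_*$ is a bijection on the full spaces of Borel probability measures. So it suffices to show that $\iota_*$ is injective on the image of $(h^{-1})_*\mathcal M_\psi(\pi(\Omega_\phi))$, which consists of measures on the Borel space $\Omega_\phi\setminus D$. This is immediate: $\Omega_\phi\setminus D$ is Borel in $X$ (closed minus compact), so every Borel subset $B\subset\Omega_\phi\setminus D$ is Borel in $X$ with $\iota^{-1}(B)=B$; hence $\iota_*\mu(B)=\mu(B)$ determines $\mu$ on all of its domain.

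For \emph{surjectivity}, I will take $\mu\in\mathcal M_\phi(X)$ and use Lemmas~\ref{le.suporte} and~\ref{le.mud} to conclude $\mu(\Omega_\phi\setminus D)=1$. Define $\tilde\mu$ on Borel subsets of $\Omega_\phi\setminus D$ by $\tilde\mu(B)=\mu(B)$; then $\iota_*\tilde\mu=\mu$. The key check is that $\tilde\mu$ is $\phi_t|_{\Omega_\phi\setminus D}$-invariant: for any Borel $B\subset\Omega_\phi\setminus D$,
\begin{equation*}
(\phi_t|_{\Omega_\phi\setminus D})_*\tilde\mu(B)
= \mu\bigl(\phi_t^{-1}(B)\cap(\Omega_\phi\setminus D)\bigr)
= \mu(\phi_t^{-1}(B)) = \mu(B) = \tilde\mu(B),
\end{equation*}
where the second equality uses $\mu(X\setminus(\Omega_\phi\setminus D))=0$. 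Setting $\nu=h_*\tilde\mu$ and invoking property (P3) with the conjugation \eqref{eq.conjuga} of $\phi_t|_{\Omega_\phi\setminus D}$ and $\psi_t$ by $h$, we get $\nu\in\mathcal M_\psi(\pi(\Omega_\phi))$. Finally, by (P1) and (P2), $(\iota\circ h^{-1})_*\nu=\iota_*(h^{-1})_*h_*\tilde\mu=\iota_*\tilde\mu=\mu$. I expect the subtlest step to be verifying the $\phi_t|_{\Omega_\phi\setminus D}$-invariance of $\tilde\mu$, since it is precisely here that one must exploit both conclusions $\mu(D)=0$ and $\mu(X\setminus\Omega_\phi)=0$ simultaneously to drop the intersection with $\Omega_\phi\setminus D$ inside the argument of $\mu$.
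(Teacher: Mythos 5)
Your proof is correct, and its overall architecture matches the paper's: establish well-definedness by combining the intertwining relations $\phi_t\circ\iota=\iota\circ\phi_t\vert_{\Omega_\phi\setminus D}$ and $\phi_t\vert_{\Omega_\phi\setminus D}\circ h^{-1}=h^{-1}\circ\psi_t$ with property (P3), then factor $(\iota\circ h^{-1})_*=\iota_*\circ(h^{-1})_*$ and reduce bijectivity to that of $\iota_*$ via (P1)--(P2). Where you genuinely diverge is in how the bijectivity of $\iota_*$ is handled. The paper deduces surjectivity of $\iota_*$ abstractly from the existence of a left inverse of $\iota$ and proves injectivity using Lemmas~\ref{le.suporte} and~\ref{le.mud}; you do the opposite, getting injectivity for free (since $\Omega_\phi\setminus D$ is Borel in $X$, every Borel subset of it is Borel in $X$ and $\iota_*\mu$ determines $\mu$ outright) and reserving the two lemmas for surjectivity, where you exhibit the preimage explicitly as the restriction $\tilde\mu$ of $\mu$ to $\Omega_\phi\setminus D$ and verify its $\phi_t\vert_{\Omega_\phi\setminus D}$-invariance. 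Your allocation is arguably the more transparent one: the fact that every $\phi$-invariant $\mu$ satisfies $\mu(\Omega_\phi\setminus D)=1$ is exactly what makes $\iota_*$ \emph{onto} $\mathcal M_\phi(X)$ (a bare left-inverse argument only yields that $\iota_*$ is a section of some $g_*$, and one still needs the concentration of $\mu$ on $\Omega_\phi\setminus D$ to conclude $\iota_*(g_*\mu)=\mu$), so your explicit construction makes visible a dependence that the paper's surjectivity step leaves implicit. Both routes are valid and of comparable length; yours buys a cleaner accounting of where Lemmas~\ref{le.suporte} and~\ref{le.mud} are actually indispensable.
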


 \begin{proof}
 To see that $ (\iota\circ h^{-1})_*$ is well defined, we need to check that if $\nu\in \mathcal M_\psi(\pi(\Omega_\phi))$, then one necessarily has $ (\iota\circ h^{-1})_*\nu\in M_\phi(X)$.
Recalling  Theorem~\ref{the.B}, we may define  the ``restricted" semiflow $\hat\phi:\R^+_0\times (\Omega_\phi\setminus D)\to \Omega_\phi\setminus D$ as
  $$\hat\phi_t(x)=\phi_t(x).$$
We clearly have
  \begin{equation}\label{eq.primaconj}
\phi_t\circ\iota =\iota\circ\hat\phi_t ,\quad\text{for all $t\ge0$}.
\end{equation}
Equation \eqref{eq.conjuga} can now be restated as
 $$h\circ\hat\phi_t=\psi_t\circ h,\quad\text{for all $t\ge0$},$$
 or equivalently
 \begin{equation}\label{eq.menosum}
 \hat\phi_t\circ h^{-1}=h^{-1}\circ\psi_t,\quad\text{for all $t\ge0$}.
\end{equation}
Now it follows from  (P3), \eqref{eq.primaconj} and \eqref{eq.menosum}  that
 $$\nu\in \mathcal M_{\psi}(\pi(\Omega_\phi)) \quad\Rightarrow\quad (h^{-1})_*\nu \in \mathcal M_{\hat\phi}(\Omega_\phi\setminus D) \quad\Rightarrow\quad \iota_*(h^{-1})_*\nu\in \mathcal M_{\phi}(X). $$
Finally, using (P1) we obtain $\iota_*(h^{-1})_*=(\iota\circ h^{-1})_*$, and so $(\iota\circ h^{-1})_*\nu\in \mathcal M_{\phi}(X)$. Hence $ (\iota\circ h^{-1})_*$ is well defined.

We are left to check that $(\iota\circ h^{-1})_*$ is a bijection. As $(\iota\circ h^{-1})_*=\iota_*(h^{-1})_*$ and
$(h^{-1})_*$ is invertible by (P2), we only have to show that $\iota_*$ is invertible. Clearly, $\iota$ being injective it has a left inverse, which implies, by (P1),  that $\iota_*$ has a right inverse, and so $\iota_*$ is surjective. We are left to prove  that $\iota_*$ is injective. Let  $\mu_1,\mu_2\in \mathcal M_{\hat\phi}(\Omega_\phi\setminus D)$ be such that $\iota_*\mu_1=\iota_*\mu_2$. This implies that
  for  any Borel set $A\subset X$ we have
  \begin{equation}\label{eq.mu12}
\mu_1(A\cap (\Omega_\phi\setminus D) )=\mu_2(A\cap (\Omega_\phi\setminus D) ).
\end{equation}
Finally, using  Lemma~\ref{le.suporte} and Lemma~\ref{le.mud} it easily follows from \eqref{eq.mu12} that $\mu_1(A)=\mu_2(A)$, and so  $\iota_*$ is injective.
\end{proof}

\end{document}